

\documentclass[final,1p,times]{elsarticle}



\usepackage{amssymb}
\newcommand{\R}{{\Bbb R}}

\newcommand{\C}{{\Bbb C}}

\usepackage{amsmath}

\newtheorem{thm}{Theorem}
\newtheorem{lemma}[thm]{Lemma}
\newtheorem{corollary}[thm]{Corollary}
\newtheorem{proposition}[thm]{Proposition}

\newtheorem{definition}[thm]{Definition}
\newtheorem{remark}[thm]{Remark}
\newproof{proof}{Proof}






\journal{a journal}

\begin{document}

\begin{frontmatter}



\title{A note on the existence of non-monotone non-oscillating  wavefronts}

\author[a]{Anatoli Ivanov}
\author[b]{Carlos Gomez}
\address[a]{
Department of Mathematics,  Pennsylvania State University,
P.O. Box PSU, Lehman, PA 18627, USA
\\ {\rm E-mail: afi1@psu.edu}}

\author[b]{and Sergei Trofimchuk\footnote{Corresponding author.}}
\address[b]{Instituto de Matem\'atica y Fisica, Universidad de Talca, Casilla 747,
Talca, Chile \\ {\rm E-mail: trofimch@inst-mat.utalca.cl}}
\bigskip

\begin{abstract}
\noindent
In this note, we present a monostable delayed reaction-diffusion equation with the unimodal birth function which admits only non-monotone  wavefronts.  Moreover, these fronts are either eventually monotone (in particular, such is  the minimal wave)  or slowly oscillating. Hence, for  the Mackey-Glass type diffusive equations, we answer affirmatively the question  about the existence of non-monotone  non-oscillating  wavefronts.  As it was recently established by Hasik {\it et al.} and Ducrot {\it et al.}, the same question has a negative answer for the KPP-Fisher equation with a single delay.

\end{abstract}
\begin{keyword} Monostable  nonlinearity, diffusive Mackey-Glass equation, delay, wavefront,  non-monotone response. \\
{\it 2010 Mathematics Subject Classification}: {\ 45G10, 34K12,
92D25 }
\end{keyword}

\end{frontmatter}

\newpage

\section{Introduction and main results} \label{intro}

This note deals with the traveling waves for  the diffusive Mackey-Glass type  equation
\begin{equation}\label{17} \hspace{-7mm}
u_t(t,x) = \Delta u(t,x)  - u(t,x) + g(u(t-h,x)), \ \ u(t,x) \geq
0,\ x \in \R^m.
\end{equation}
Population model (\ref{17}) was extensively studied (including its non-local version) during the past decade, e.g. see \cite{FGT, gouss, MLLS, MeiI, TTT} and references therein.  Notice that the non-negativity condition $u(t,x) \geq
0$ of (\ref{17}) is due to  the biological interpretation of $u$ as the size of
an adult population. In this paper we are
mostly concerned with classical positive solutions to (\ref{17}) of the special form $u(t,x) = \phi(ct+ \nu\cdot x),$ $ c >0,$ $ |\nu|=1$, where $\phi$ additionally satisfies the boundary conditions $\phi(-\infty) =0,\ \phi(+\infty)=\kappa$. Such solutions of  equation  (\ref{17}) are called traveling  fronts or simply wavefronts. The function $\phi:\R \to \R_+$ said to be the profile of the wavefront $u(t,x) = \phi(ct+ \nu\cdot x)$.  It is easy to see that each  profile $\phi$ is a positive heteroclinic solution of the delay differential equation
\begin{equation}\label{twe0}
x''(t) - cx'(t)-x(t)+ g(x(t-ch))=0, \quad t \in \R.
\end{equation}
The nonlinear term $g$ in  (\ref{17}) and  (\ref{twe0}) plays the role of  a {\it  birth
function} and therefore it is  non-negative. Motivated by various concrete applications, throughout the paper we
assume that $g$ satisfies the following unimodality condition
\begin{description}
\item[{\rm \bf(UM)}] $g: \R_+ \to  \R_+$ is continuous and has
only one positive local extremum point $x=\theta$ (global maximum).
Furthermore, $g$ has two equilibria $g(0)=0, \ g(\kappa)=\kappa$ with
$g'(0)>1, \ g'(\kappa)<1$ and additionally satisfies $g(x)>x$ for $x\in(0,\kappa)$ and $g(x)<x$ for $x>\kappa$.
\end{description}
Therefore, in view of the terminology used in the traveling waves theory, the diffusive Mackey-Glass type  equation  (\ref{17}) is of monostable type \cite{gouss}.
In the particular case when $g$ is monotone on the interval $[0,\kappa]$ there is quite satisfactory description of all  wavefront solutions for equation (\ref{17}) given by the following result. \begin{proposition} \label{PA1} \cite{LZii,TPT} Suppose that $g$ satisfies {\rm \bf(UM)} and is strictly monotone on $[0,\kappa]$. Then there is
$c_*>0$ (called the minimal speed of propagation) such that equation (\ref{17}) has a unique (up to a translation)  wavefront $u(t,x) = \phi(ct+ \nu\cdot x)$ for each $c \geq c_*$ and every $h \geq 0$. In addition, the profile $\phi$ is a strictly increasing function. If $c <c_*$ then equation (\ref{17}) does not have any wavefront.
\end{proposition}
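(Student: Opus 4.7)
The plan is to treat the three claims of the proposition separately: existence of wavefronts for $c\geq c_*$, uniqueness up to a translation, and non-existence for $c<c_*$. Because $g$ is strictly monotone on $[0,\kappa]$, the crucial structural feature of (\ref{twe0}) is that it becomes a scalar cooperative delay equation on this interval, which lets me invoke the classical upper/lower solution and monotone iteration machinery that has become standard for monostable problems.

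First I would linearize (\ref{twe0}) at the trivial equilibrium to obtain the characteristic function $\chi_c(\lambda) = \lambda^2 - c\lambda - 1 + g'(0)\,e^{-\lambda c h}$. Since $g'(0)>1$, one has $\chi_c(0)>0$ and $\chi_c(+\infty)=+\infty$, and a direct computation shows that $\min_{\lambda>0}\chi_c(\lambda)$ is strictly positive for small $c$, strictly negative for large $c$, and strictly decreasing in $c$. Defining $c_*>0$ as the unique speed at which this minimum vanishes, one obtains a positive real double root at $c=c_*$ and two simple positive real roots $\lambda_1(c)<\lambda_2(c)$ for each $c>c_*$, whereas no positive real root exists for $c<c_*$.

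For existence when $c\geq c_*$, I would construct an explicit bounded upper solution $\bar\phi(t)=\min\{\kappa,\, e^{\lambda_1(c)t}\}$ (with a logarithmic correction in the critical case $c=c_*$) together with a Diekmann--Kaper type lower solution $\underline\phi(t)=\max\{0,\, e^{\lambda_1(c)t}-M e^{\eta\lambda_1(c)t}\}$ for $\eta\in(1,\min\{2,\lambda_2/\lambda_1\})$ and $M$ large. Rewriting (\ref{twe0}) as a fixed-point equation $\phi=T\phi$ with $(T\phi)(t)=\int_\R K(t-s)\,g(\phi(s-ch))\,ds$, where $K$ is the exponentially decaying Green kernel of $x''-cx'-x=0$ on $\R$, the monotonicity of $g$ on $[0,\kappa]$ makes $T$ order-preserving on $[\underline\phi,\bar\phi]$. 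Iterating from $\bar\phi$ yields a pointwise decreasing limit $\phi$, which is a nondecreasing wavefront; the limits $\phi(\pm\infty)\in\{0,\kappa\}$ follow from the sandwich $\underline\phi\leq\phi\leq\bar\phi$, and strict monotonicity is a consequence of the strong maximum principle applied to the cooperative equation.

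For uniqueness I would use a sliding argument: Ikehara's Tauberian theorem forces any monotone wavefront to satisfy $\phi(t)\sim A e^{\lambda_1(c)t}$ as $t\to-\infty$, so two wavefronts can first be translated to share the same leading coefficient, and then pushing one far to the left of the other and sliding it back until the graphs first touch, the strong maximum principle forces coincidence. For non-existence when $c<c_*$, I would assume a wavefront $\phi$ exists, apply the bilateral Laplace transform to its profile, and invoke Ikehara again to deduce that $\phi$ decays at $-\infty$ like $e^{\lambda t}$ for some positive real zero $\lambda$ of $\chi_c$, contradicting the definition of $c_*$. I expect the main technical obstacle to be the critical case $c=c_*$, where the double root forces asymptotics of the form $\phi(t)\sim A|t|\,e^{\lambda_* t}$ and therefore requires both corrected upper/lower solutions and a refined Tauberian step; away from $c_*$ everything proceeds by the standard toolkit.
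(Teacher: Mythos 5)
The paper offers no proof of this proposition; it is quoted verbatim from \cite{LZii,TPT}, so your sketch has to be measured against what those references actually establish. The decisive gap in your plan is the definition of $c_*$. You take $c_*$ to be the linearly determined speed $c_0$ at which $\chi_0(z)=z^2-cz-1+g'(0)e^{-zch}$ acquires a positive double root, and everything downstream — the upper solution $\min\{\kappa,e^{\lambda_1 t}\}$, the Ikehara asymptotics $\phi(t)\sim Ae^{\lambda_1 t}$, the non-existence argument — presupposes that the true minimal speed coincides with $c_0$. That identification requires a sub-tangency (KPP-type) condition such as $g(u)\le g'(0)u$ on $[0,\kappa]$, which is \emph{not} contained in \textbf{(UM)}: the hypotheses allow $g(u)>g'(0)u$ on part of $(0,\kappa)$, in which case the front can be \emph{pushed} and the actual minimal speed is strictly larger than $c_0$. (This is precisely why the proposition cites \cite{TPT}, a paper about pushed fronts; note also that the paper's own condition (\ref{sc}) is an extra hypothesis, not a consequence of \textbf{(UM)}.) Concretely: (i) your upper-solution inequality on the exponential piece needs $g(e^{\lambda_1(s-ch)})\le g'(0)e^{\lambda_1(s-ch)}$ and fails without sub-tangency, so existence for all $c\ge c_0$ is not obtained — and indeed is false in the pushed case, where no front exists on $[c_0,c_*)$; (ii) your non-existence argument only covers $c<c_0$ and is silent on $[c_0,c_*)$; (iii) the minimal pushed front decays at the fast rate $e^{\lambda_2 t}$ rather than $e^{\lambda_1 t}$, so the Ikehara normalization underlying your sliding/uniqueness step is also wrong there. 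The route actually taken in the cited works is to define $c_*$ as the spreading speed of the associated monotone semiflow (Liang--Zhao), for which there is in general no formula in terms of $\chi_0$, deduce existence for $c\ge c_*$ and non-existence for $c<c_*$ from that abstract theory, and then handle uniqueness and strict monotonicity (including the pushed and critical cases) by Diekmann--Kaper/Ikehara arguments adapted to both possible decay rates, as in \cite{TPT,AGT}.

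Two secondary points. First, the Diekmann--Kaper lower solution $e^{\lambda_1 t}-Me^{\eta\lambda_1 t}$ requires a quantitative estimate $g(u)\ge g'(0)u-Cu^{\eta}$ near $0$; \textbf{(UM)} gives only continuity and differentiability at $0$, with no rate on the remainder, so some H\"older-type regularity at the origin must be assumed or circumvented. Second, uniqueness must be proved among \emph{all} wavefronts, not only among the monotone ones produced by your iteration; you therefore need an a priori monotonicity (or asymptotics) statement for an arbitrary front before the sliding argument can start. In the sub-tangential setting your outline is the standard and workable toolkit, but as written it proves a weaker proposition than the one stated.
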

It is worth noting that  the stability of monotone fronts of (\ref{17}) was successfully analysed in \cite{MLLS,MeiI}.

Now, if $\theta \in (0, \kappa)$ (so that $g$ is not anymore monotone on $[0,\kappa]$), much less information on  the traveling fronts to  (\ref{17}) is available. In particular, as far as we know, for a general function $g$ satisfying the hypothesis {\rm \bf(UM)}, none of the three aspects  (the existence of the minimal speed $c_*$, the uniqueness, the monotonicity properties, the wavefront stability) mentioned in Proposition \ref{PA1} has received a satisfactory characterization.  In this paper, we shed some new light on the description  of  possible  geometric shapes of the wavefront profiles $\phi$. Due to the biological interpretation of solutions to (\ref{17}),  the geometric properties  of leading (invading) parts of wavefront profiles  characterize the `smoothness' of the expansion (invasion) processes.  
This fact  shows the practical importance of our studies. 
 A first  picture of the wavefront monotonicity properties  was obtained in \cite{TTT} under the following additional condition

\begin{description}
\item[{\rm \bf(FC)}]  The restriction $g:
[g^2(\theta), g(\theta)] \to \R_+$ has the positive feedback with respect
to the equilibrium $\kappa$:\, $(g(x) - \kappa)(x - \kappa) < 0,$  $x \not= \kappa$. Here we use the notation 
$g^2(\theta)$ for $g(g(\theta))$. 
\end{description}
More precisely, the following result holds:
\begin{proposition}\cite{TTT}
\label{PA2} Consider the case when {\rm \bf(UM)} holds and $g'(\kappa) < 0$. Let  $u(x,t) = \phi(\nu \cdot x +ct)$ be a  wavefront
to Eq. (\ref{17}).  Then there exists $\tau_1 \in \R \cup \{+\infty\}$  such that $\phi'(s) > 0$ on
$(-\infty, \tau_1)$. Furthermore, $\tau_1$ is finite if and only if
$\phi(\tau_1) > \kappa$.  If, in addition, the birth function $g$  satisfies  {\rm \bf(FC)},  then $\phi$ is eventually either monotone or
slowly oscillating around $\kappa$.  Finally, if $\tau_0$ is the leftmost point where $\phi(\tau_0) = \theta$ then $\tau_1-\tau_0 \geq ch$.  \end{proposition}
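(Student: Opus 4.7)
\emph{Proof proposal.} My plan splits into three phases: asymptotic behaviour at $-\infty$, location of the first critical point, and long-time behaviour at $+\infty$.

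First I would establish that $\phi'(t)>0$ for $t$ near $-\infty$. Linearising (\ref{twe0}) around $\phi=0$ produces the characteristic function $\chi_0(\lambda):=\lambda^2-c\lambda-1+g'(0)e^{-\lambda ch}$, and since $\phi$ is a positive bounded solution with $\phi(-\infty)=0$, standard asymptotic expansions for positive decaying solutions of linear DDEs (in the spirit of Mallet-Paret and Sell) yield $\phi(t)=Ae^{\lambda_1 t}(1+o(1))$ with $A>0$ and $\lambda_1>0$ the smallest positive real root of $\chi_0$. Differentiating gives $\phi'(t)>0$ for $t$ sufficiently negative, so $\tau_1:=\sup\{s\in\R:\phi'>0\text{ on }(-\infty,s)\}\in\R\cup\{+\infty\}$ is well defined.

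Now suppose $\tau_1<+\infty$. By continuity $\phi'(\tau_1)=0$, and because $\tau_1$ ends the strictly increasing region, $\phi''(\tau_1)\le 0$, so evaluating (\ref{twe0}) at $\tau_1$ gives the key pointwise inequality $\phi(\tau_1)\le g(\phi(\tau_1-ch))$. To conclude $\phi(\tau_1)>\kappa$ I would argue by contradiction: if $\phi(\tau_1)\le\kappa$, then $\phi$ and $\phi(\cdot-ch)$ both lie in $[0,\kappa]$ on $(-\infty,\tau_1]$, and I would exploit the integral representation of $\phi$ produced by the Green's function of $Lx:=x''-cx'-x$ (which has a bilateral positive kernel built from the roots $\mu_\pm=(c\pm\sqrt{c^2+4})/2$). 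Comparing with the unique monotone wavefront of Proposition~\ref{PA1} applied to an auxiliary nonlinearity $\tilde g\le g$ that agrees with $g$ on $[0,\theta]$ and is extended monotonically on $[\theta,\kappa]$, a sliding / maximum-principle argument rules out any local maximum of $\phi$ below $\kappa$. Once $\phi(\tau_1)>\kappa$ is secured, $\tau_0<\tau_1$ exists by monotonicity of $\phi$ on $(-\infty,\tau_1)$, and the delay estimate $\tau_1-\tau_0\ge ch$ follows from observing that on $(-\infty,\tau_0+ch)$ the delayed argument $\phi(\cdot-ch)$ remains in $[0,\theta]$, where $g$ is monotone; the same integral-representation / comparison argument applied on this interval forces $\phi'>0$ throughout $(-\infty,\tau_0+ch)$, whence $\tau_1\ge\tau_0+ch$.

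For the dichotomy under \textbf{(FC)}, unimodality and boundedness of $\phi$ make $\phi(t)$ enter $[g^2(\theta),g(\theta)]$ for all large $t$, and on that interval \textbf{(FC)} asserts that $g$ is a negative-feedback map around $\kappa$. Setting $u:=\phi-\kappa$ and inverting $L$ via its Green's function reduces the tail problem to a scalar delay equation with negative feedback around $0$; Mallet-Paret's discrete Lyapunov and Morse-decomposition theory for such equations then delivers the dichotomy: either $u$ is eventually of one sign (eventual monotonicity of $\phi$) or consecutive zeros of $u$ are separated by at least $ch$ (slow oscillation around $\kappa$). The principal obstacle is the contradiction step excluding a finite $\tau_1$ with $\phi(\tau_1)\le\kappa$: the inequality $\phi(\tau_1)\le g(\phi(\tau_1-ch))$ is perfectly compatible with $\phi(\tau_1)<\kappa$ under unimodality, so purely local ODE bookkeeping at $\tau_1$ cannot succeed, and one is forced to use the global integral structure of $\phi$ together with the comparison against the monotone wavefront on the subinterval where the feedback $g\circ\phi(\cdot-ch)$ is still monotone.
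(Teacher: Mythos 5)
This proposition is quoted from \cite{TTT}; the present paper supplies no proof of it, so the only benchmark is the method of that reference (of which the paper shows a trace: the sign-change functional ${\rm sc}$ on $\mathbb{K}=[-ch,0]\cup\{1\}$ in the Definition following the proposition). Your first phase and your pointwise computation at $\tau_1$ (namely $\phi''(\tau_1)\le 0$, hence $\phi(\tau_1)\le g(\phi(\tau_1-ch))$) are correct, and you rightly observe that this inequality alone cannot force $\phi(\tau_1)>\kappa$. The genuine gap is that the global tool you propose in its place would not close. The profile is reproduced by the Green's function of $Lx=x''-cx'-x$, and that representation (equation (\ref{ie1}) of the paper) contains an \emph{anticausal} component $\int_t^{+\infty}e^{z_2(t-s)}g(\phi(s-ch))\,ds$: the values of $\phi$ on $(-\infty,\tau_1]$ are not determined by the data on $(-\infty,\tau_1]$, since solutions of $Lx=-G$ on a left half-line with $x(-\infty)=0$ form a one-parameter family differing by multiples of $e^{z_2t}$. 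A sliding/comparison argument against the monotone wavefront of an auxiliary $\tilde g\le g$, restricted to "the subinterval where the delayed feedback is still monotone", therefore inevitably requires control of $g(\phi(s-ch))$ for arbitrarily large $s$, where $g\circ\phi$ is neither monotone nor ordered relative to the comparison profile; and since the equation is not quasi-monotone there is no maximum principle to fall back on. This breaks both your derivation of $\phi(\tau_1)>\kappa$ and your derivation of $\tau_1-\tau_0\ge ch$, which are the two substantive claims.

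The same anticausal kernel undermines your last phase: inverting $L$ turns the tail problem into a convolution equation of \emph{mixed} (retarded and advanced) type, not a scalar retarded delay equation, so Mallet-Paret's discrete Lyapunov and Morse-decomposition theory cannot be cited off the shelf. This is precisely why \cite{TTT} introduces the bespoke functional ${\rm sc}(\bar\varphi_t)$ reproduced in the paper --- note that it counts sign changes of the pair (history segment of $\varphi-\kappa$, the value $\varphi'(t)$), i.e.\ it is adapted to the second-order structure of (\ref{twe0}) --- and proves its monotonicity along the profile directly; the statements $\phi(\tau_1)>\kappa$ and $\tau_1-\tau_0\ge ch$ are obtained there by the same kind of direct zero-counting analysis of $\phi'$ and $\phi-\kappa$ rather than by comparison with a monotone front. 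As it stands, your plan names the right difficulties but does not supply workable arguments for them.
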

It should be noted that the existence of oscillating traveling fronts in  the delayed reaction-diffusion equations  is by now a well-known fact confirmed both numerically  and analytically.  The subclass of slowly
oscillating profiles is defined below:
\begin{definition} Set $\mathbb{K} = [-ch,0] \cup \{1\}$. For any
$v \in C(\mathbb{K})\setminus\{0\}$ we define the number of sign
changes by $$\hspace{-1mm} {\rm sc}(v) = \sup\{k \geq 1:{\rm \it
there \ are \ } t_0 <
 \dots < t_k  \ {\rm \it such \ that\ }
v(t_{i-1})v(t_{i}) <0 {\rm \ for \ }  i\geq 1\}. $$ We set ${\rm
sc}(v) =0$ if $v(s) \geq 0$ or  $v(s) \leq 0$ for $s \in
\mathbb{K}$. If $\varphi(t), t \geq a-ch,$ is a solution of
Eq. (\ref{twe0}), we set \ $(\bar \varphi_t)(s) = \varphi(t+s)-
\kappa$ if $s \in [-ch,0]$, and $(\bar \varphi_t)(1) = \varphi'(t)$.
We say that $\varphi(t)$ is slowly oscillating about $\kappa$
if $\varphi(t)-\kappa$ is oscillatory and for each $t \geq a$, we
have either sc$(\bar \varphi_t)=1$ or sc$(\bar \varphi_t)=2$.
\end{definition}

The studies carried over in \cite{TTT} have left unanswered the conjecture about  the existence of non-monotone
but eventually monotone traveling fronts  for equation (\ref{17}) (in particular, for  the well-known diffusive Nicholson's blowflies equation with $g(x) = px\exp(-x)$).   The new facts that have appeared after publication of  \cite{TTT}  did not give an unconditional support to this conjecture. From one side,  numerical simulations of wavefronts for more general non-local equations (e.g. the non-local KPP-Fisher equation \cite{BNPR}) indicate, in certain cases,  the presence of non-monotone but eventually monotone traveling fronts.  See also \cite{ZAMP, IDD, HK, NPT,ST}. On the other hand, the recent works \cite{DN,HK} establish analytically that the KPP-Fisher equation with a finite discrete delay can have wavefronts only with profiles which are either monotone or slowly oscillating around $\kappa$. It is noteworthy  that the above mentioned results of  \cite{DN,HK} were  predicted in \cite{NRRP}. 

In any event, in the present work we give a rigorous analytical justification of the existence of  the proper eventually monotone wavefronts to equation (\ref{17}), see Fig.\ref{F1} below.   In consequence, we answer affirmatively the conjecture stated in \cite{TTT}.
\begin{figure}[h] \label{F1}
\centering\fbox{\includegraphics[width=13cm]{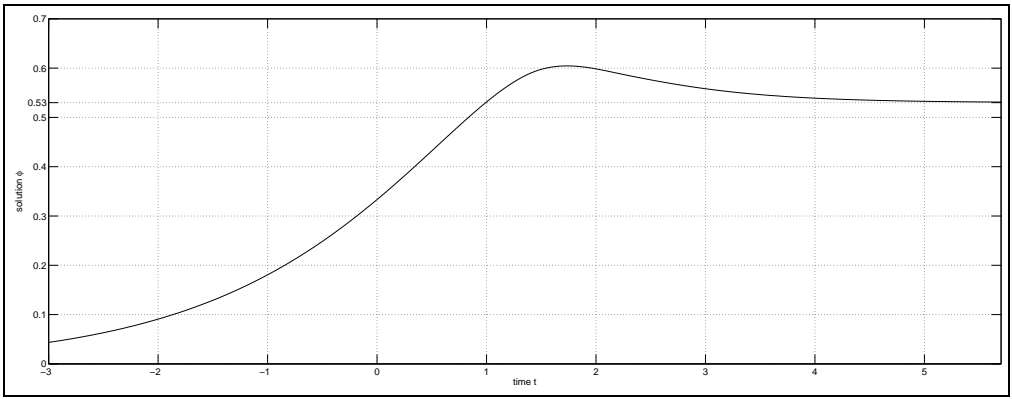}} 
\caption{\hspace{0.1 cm} Profile of a minimal, non-monotone and non-oscillating wavefront to equation (\ref{17}).}
\end{figure}

\noindent Actually, our main result contains even more information:
\begin{thm}\label{main1}
There is a piece-wise linear unimodal function $g$ (see Fig. 2) satisfying   ${\rm \bf(UM)}$, ${\rm \bf(FC)}$  and positive numbers $h, c_*< c^*$  such that the following holds:

a) \ for each $c \geq c_*$ equation (\ref{17}) has a unique wavefront $u(t,x) = \phi(x\cdot \nu +ct), \ |\nu|=1$,  and it does not have any wavefront propagating  with the speed  $c < c_*$;

b) \ for each $c \in [c_*,c^*]$, the profile  $\phi$ is non-monotone but eventually monotone  (see Fig. \ref{F1}, where the minimal front is depicted);

c) \ for each $c > c^*$, the wavefront  profile $\phi$ slowly oscillates around $\kappa$.
\end{thm}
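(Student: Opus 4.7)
The plan is to exhibit an explicit piecewise linear $g$ for which the profile equation (\ref{twe0}) reduces to a \emph{linear, autonomous} delay equation on every affine piece of $g$, so that the ultimate geometry of the profile is controlled \emph{exactly} by a finite-dimensional spectrum. Concretely, I would take $g$ piecewise linear with $g(u)=pu$ on $[0,\theta]$ for some $p>1$, $g(u)=\kappa-q(u-\kappa)$ for $u$ in a genuine neighbourhood of $\kappa$ with $q>0$, and one affine bridge above the peak level; the parameters $p,q,\theta,\kappa,h$ are tuned so that ${\rm \bf(UM)}$ and ${\rm \bf(FC)}$ hold with $g'(\kappa)=-q$ and $g(\theta)=p\theta>\kappa$. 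Because $g$ is truly affine near $\kappa$, the tail $y:=\phi-\kappa$ of any profile satisfies the linear delay equation
\begin{equation*}
y''(t)-c\,y'(t)-y(t)-q\,y(t-ch)=0
\end{equation*}
exactly on a half-line $[T,\infty)$, which makes the tail analysis sharp rather than asymptotic.

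For part (a), existence of a wavefront for every $c\ge c_\ast$ follows from the sub-/super-solution and monotone iteration schemes available for unimodal $g$ satisfying ${\rm \bf(UM)}$ and ${\rm \bf(FC)}$, as developed in \cite{HK,NPT,TPT,TTT}; here $c_\ast$ is the usual KPP-type minimal speed, namely the smallest $c>0$ for which the characteristic equation at $0$,
\[
\lambda^2-c\lambda-1+p\,e^{-\lambda c h}=0,
\]
admits a positive real root, and non-existence for $c<c_\ast$ is the classical linear-determinacy argument. Uniqueness up to translation is obtained from the explicit piecewise form of the profile: on each affine piece of $g$ the profile $\phi$ is a finite linear combination of characteristic exponentials, and the finitely many free constants are pinned down by the $C^1$ matching conditions at the breakpoints of $g$ together with $\phi(-\infty)=0$, $\phi(+\infty)=\kappa$ and the normalisation $\phi(\tau_0)=\theta$.

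For parts (b) and (c), Proposition \ref{PA2} already guarantees that every profile is initially strictly increasing and eventually either monotone or slowly oscillating about $\kappa$, so the task reduces to deciding, as a function of $c$, which alternative occurs. This is read off the characteristic function of the tail,
\[
\chi(\lambda,c):=\lambda^2-c\lambda-1-q\,e^{-\lambda c h}.
\]
For $q$ large enough, a direct analytic check of $\chi(\cdot,c)$ on the negative real axis shows that there are exactly two simple real negative roots for $c\in[c_\ast,c^*)$, one real negative double root at $c=c^*$, and no real roots at all, but a dominant complex conjugate pair in the left half-plane, for $c>c^*$. Since $y$ is bounded and $y(t)\to 0$ as $t\to+\infty$, its tail lies in the finite-dimensional subspace spanned by the characteristic modes with negative real part; the rightmost such mode is real in the first case (forcing an eventually monotone tail) and complex in the second (forcing sign changes, hence slow oscillation in view of Proposition \ref{PA2}). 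The non-monotonicity required in (b) is secured by our choice $g(\theta)=p\theta>\kappa$: combined with Proposition \ref{PA2} this yields $\tau_1<\infty$ with $\phi(\tau_1)>\kappa$, so $\phi$ genuinely overshoots the equilibrium.

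The main obstacle I anticipate is the tail-matching step, namely verifying that the coefficient in front of the \emph{dominant} mode in the explicit decomposition of $y$ is actually non-zero once one matches across the last breakpoint of $g$. Without this, a profile could, for $c>c^*$, be ``accidentally'' monotone because its leading complex amplitude vanishes, and symmetrically for $c<c^*$. This is a single non-degenerate Wronskian-type inequality in the characteristic exponents of the two affine pieces meeting at the breakpoint, and it will be checked for the specific $g$ built above, with the inequality staying robust under small perturbations of the breakpoint data.
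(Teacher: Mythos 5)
Your proposal reproduces the general skeleton of the argument (a three-piece linear $g$, spectral analysis of the linearizations at $0$ and at $\kappa$, and the dichotomy real-dominant-mode/complex-dominant-mode separating eventually monotone from slowly oscillating tails), but it misses the step that is the actual heart of the theorem: \emph{why the front overshoots $\kappa$ at all}. You claim that non-monotonicity is "secured by $g(\theta)=p\theta>\kappa$ combined with Proposition \ref{PA2}." This is a non-sequitur: $g(\theta)>\kappa$ holds automatically for \emph{every} $g$ satisfying {\rm \bf(UM)} with $\theta<\kappa$ (since $g(x)>x$ on $(0,\kappa)$ and $g(\theta)=\max g$), yet monotone fronts certainly exist for many such $g$ — e.g.\ whenever the sub-tangency condition (\ref{sc}) holds, every eventually monotone front is monotone. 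Proposition \ref{PA2} only says $\tau_1<\infty$ \emph{if and only if} $\phi(\tau_1)>\kappa$; it gives no criterion in terms of $g$ for deciding which alternative occurs. The paper's proof closes exactly this gap quantitatively: on $(-\infty,ch]$ the profile is \emph{exactly} $\phi(t)=pe^{\mu_2 t}+(\theta-p)e^{\mu_1 t}$ (finitely many modes, by non-smallness at $-\infty$), the sign $p>\theta$ is extracted from a bilateral-Laplace-transform residue computation, the upper bound on $p$ comes from $\phi'(ch)\ge 0$, and these two constraints force the explicit lower bound $\phi(ch)\ge g(\theta)/(1+\mu_1\mu_2)$. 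One must then \emph{choose parameters} so that $\gamma(c)=g(\theta)/(1+\mu_1(c)\mu_2(c))>\kappa$ on all of $[c_*,c^*]$ (a nontrivial numerical/analytic verification occupying a separate lemma), plus a limiting Arzel\`a--Ascoli argument to transfer the bound to the minimal speed $c_*$ where $\mu_1=\mu_2$. None of this is present or replaceable by what you wrote.

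Two further points. First, your uniqueness argument ("the finitely many free constants are pinned down by $C^1$ matching at the breakpoints") does not work: on a finite $t$-interval where $g(\phi(t-ch))$ is affine, the bounded solutions of the resulting linear delay equation are \emph{not} a finite linear combination of characteristic exponentials (that representation is available only at $\pm\infty$ via non-smallness), and the profile is anyway determined by a whole-line integral equation of convolution type, not by forward matching. The paper instead gets uniqueness from the global Lipschitz bound $|g(s_1)-g(s_2)|\le\max\{k_1,|k_2|\}|s_1-s_2|$ and the Diekmann--Kaper-type theorem of \cite{AGT}. Second, existence cannot be obtained by "monotone iteration," since $g$ is non-monotone; the paper uses the semi-wavefront existence theorem of \cite{TT} together with the global-attractivity condition {\rm \bf(GA)} (checked here via $|k_2k_3|=0.75<1$) to upgrade semi-wavefronts to wavefronts. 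Your closing worry about the dominant complex amplitude vanishing for $c>c^*$ is handled in the paper not by a Wronskian computation but by the non-small-solution theorem plus the observation that any complex zero of $\chi_\kappa$ satisfies $|\Im\lambda|>2\pi/(ch)$, so a profile driven by such a mode would oscillate rapidly, contradicting Proposition \ref{PA2}.
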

The proof of this theorem combines several ideas from \cite{GT, FGT, TTT}.  It is given in the next section.

\section{Proof of Theorem \ref{main1}}\label{Se1}
A direct  analysis of (\ref{twe0}) shows that each local maximum $M_j = \phi(t_j)$ of the front profile $\phi(t)$
should satisfy the inequality 
$$M_j =  \phi''(t_j)- c\phi'(t_j) + g(\phi(t_j-ch)) \leq g(\theta).$$  Therefore it suffices to consider $g$ defined on  the interval
$[0, g(\theta)]$ only. In the simplest `unimodal' case, the graph of $g$ consists of two linear segments.
This nonlinearity was already analyzed in \cite{TTT}. Since,  in such a case, $g$ satisfies the following sub-tangency condition at $\kappa$:
\begin{equation} \label{sc}
g(x) \leq \kappa + g'(\kappa)(x-\kappa), \quad x \in [0,\kappa],
\end{equation}
 each eventually monotone wavefront is in fact a monotone front, see   \cite{FGT} for more detail.   Therefore, if we want to construct a piece-wise linear birth function $g$ suitable for
 Theorem \ref{main1}, its graph must contain at least three linear  segments and do not satisfy the inequality (\ref{sc}), see  Fig. \ref{F2}:
 \begin{figure}[h]\label{F2}
\centering \fbox{\includegraphics[width=5.5cm]{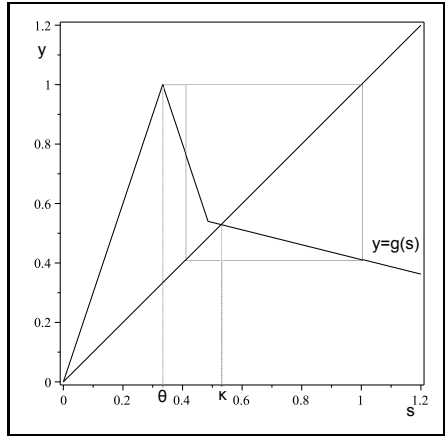}}
\caption{\hspace{0.1cm} Graph of the unimodal birth function $g$  from  Theorem \ref{main1} }
\end{figure}

\begin{equation}\label{dg}
g(x):=\left\{\begin{array}{ll}k_1x,&0\leq x\leq\theta, \\ k_2 x+q_2,&\theta \leq x\leq \theta_1, \\ k_3 x+q_3,& \theta_1 < x \leq g(\theta).  \end{array}\right.
\end{equation}
Here real numbers $q_j$ are chosen to assure the continuity of $g$.
Hence, in what follows, we will seek for the appropriate parameters $k_j, \theta_j$ and $(h,c)$ to obtain the desired shape  of the profile.
Actually, one of the main restrictions on $(h,c)$
was already found in \cite{FGT},  where it was proved that an eventually monotone wavefront in the Mackey-Glass type equation can appear only for $(h,c)$ belonging to the connected closed domain $\mathcal{D}_{\frak L}$ defined below:
\begin{definition}\label{DEF5}
$(h,c) \in \mathcal{D}_{\frak L}$ if and only if each of the  equations $\chi_0(z):= z^2-cz -1 + g'(0)e^{-z ch }=0$,   $\chi_\kappa(z):= z^2-cz -1 + g'(\kappa)e^{-z ch }=0$,  has exactly two real roots (counting the multiplicity) of the same sign: the positive roots $0< \mu_2\leq \mu_1$ for the first equation, and the negative roots $\lambda_2 \leq \lambda_1 <0$ for the second one.
\end{definition}

The following result (established in  \cite[Lemma 1.1]{FGT} and \cite[Lemma 21]{TTT})  partially describes
the structure of the set $\mathcal{D}_{\frak L}$ and other properties of eigenvalues $\lambda_j$:
\begin{lemma} \label{lc2}   Suppose that $g'(\kappa) <0$. Then there exists
$c^*=c^*(h) \in (0, +\infty]$ such
that the characteristic function $\chi_\kappa(z)$
has  three real zeros $\lambda_1\leq \lambda_2 <  0 < \lambda_3$
if and only if $c \leq c^*$.  If $c^*$ is finite and $c=c^*$,  then $\chi_\kappa$ has a double zero $\lambda_1= \lambda_2<0$,
while for $c > c^*$ there does not exist any negative
zero to $\chi_\kappa$.  Moreover,  if $\lambda_j  \in \C$ is a
complex zero of $\chi_\kappa$ for $c \in (0, c^*]$  then $\Re \lambda_j < \lambda_2$ and $|\Im \lambda_j| > 2\pi/(ch)$.
\end{lemma}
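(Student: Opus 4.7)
My plan is to extract the real zeros of $\chi_\kappa$ from the monotonicity of its second derivative, then define the critical threshold $c^*$ by a coalescence argument, and finally control complex zeros via the real/imaginary split of $\chi_\kappa(\lambda)=0$.

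First I would differentiate twice:
\[
\chi_\kappa'(z) = 2z - c - g'(\kappa)ch\, e^{-zch}, \qquad \chi_\kappa''(z) = 2 + g'(\kappa)(ch)^2 e^{-zch}.
\]
Since $g'(\kappa)<0$ and $z\mapsto e^{-zch}$ is strictly decreasing, $\chi_\kappa''$ is strictly increasing; together with $\chi_\kappa''(-\infty)=-\infty$ and $\chi_\kappa''(+\infty)=2$ this gives a unique $z_0$ where $\chi_\kappa''$ vanishes. Hence $\chi_\kappa'$ has a unique global minimum at $z_0$ and tends to $+\infty$ at $\pm\infty$, so $\chi_\kappa$ has at most three real zeros counted with multiplicity. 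Now $\chi_\kappa(0)=-1+g'(\kappa)<0$ and $\chi_\kappa(+\infty)=+\infty$ produce a unique simple positive zero $\lambda_3$; and since $\chi_\kappa(-\infty)=-\infty$ (the exponential dominates with negative coefficient), any negative real zeros appear in pairs or not at all.

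Next I would set $c^*(h)=\sup\{c>0:\chi_\kappa(\,\cdot\,;c,h)\text{ has two negative real zeros}\}$. The three-zero cap and the continuous dependence of simple zeros on $c$ force the threshold to behave as stated: two simple zeros $\lambda_1<\lambda_2<0$ for $c<c^*$, a double zero $\lambda_1=\lambda_2$ at $c=c^*$ whenever this value is finite, and no negative real zero at all for $c>c^*$. Positivity of $c^*(h)$ follows from a direct perturbation of the $c=0$ picture (where the explicit real zeros $\pm\sqrt{1-g'(\kappa)}$ persist for small $c$), while finiteness versus infiniteness of $c^*$ is read off from the sign of the local maximum of $\chi_\kappa$ on the negative half-line as $c$ grows.

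Finally, for a non-real zero $\lambda=x+iy$, $y\neq 0$, of $\chi_\kappa$ with $c\in(0,c^*]$, I would split the equation into real and imaginary parts:
\[
\chi_\kappa(x) = y^2 + g'(\kappa)e^{-xch}\bigl(1-\cos(ych)\bigr), \qquad (2x-c)y = -g'(\kappa)e^{-xch}\sin(ych).
\]
The first identity together with $g'(\kappa)(1-\cos(ych))\leq 0$ combined with the sign pattern of $\chi_\kappa$ on $\R$ established above should force $x<\lambda_2$. The lower bound $|y|>2\pi/(ch)$ is the main obstacle, and I would tackle it by contradiction: assume $|ych|\leq 2\pi$ and exploit the restricted oscillation of $\cos(ych),\sin(ych)$, eliminating the trigonometric quantities between the two identities to produce an auxiliary real-valued inequality incompatible with the two-real-zero configuration from the previous step. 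This quantitative case analysis on the location of $ych$ modulo $2\pi$ is the most delicate part of the argument, and is precisely where the rigidity imposed by $c\leq c^*$ plays a nontrivial role rather than being merely a convenient hypothesis.
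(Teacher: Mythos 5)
Note first that the paper does not actually prove Lemma~\ref{lc2}: it is imported verbatim from \cite[Lemma~1.1]{FGT} and \cite[Lemma~21]{TTT}. Your real-zero count (strict monotonicity of $\chi_\kappa''$, hence at most two critical points, hence at most three real zeros) is sound, but the rest of your argument has two genuine gaps. The first concerns the threshold. Defining $c^*$ as a supremum and appealing to ``continuous dependence of simple zeros'' shows only that the set $A=\{c>0:\ \chi_\kappa(\cdot;c)\ \mbox{has two negative zeros}\}$ is open where the zeros are simple; it does \emph{not} show that $A$ is an interval, and without that the ``three real zeros iff $c\le c^*$'' dichotomy collapses (the negative roots could a priori disappear and reappear as $c$ grows). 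The standard repair is the rescaling $w=cz$: negative zeros of $\chi_\kappa$ correspond to negative zeros of $G(w,c)=w^2/c^2-w-1+g'(\kappa)e^{-wh}$, and $\partial G/\partial c=-2w^2/c^3<0$ for $w\ne 0$, so $\max_{w<0}G(\cdot,c)$ is strictly decreasing in $c$; this gives down-closedness of $A$, identifies $c=c^*$ with the double root, and yields the ``no negative zero for $c>c^*$'' statement. Two smaller repairs are also needed: at $c=0$ the function degenerates to the quadratic $z^2-1+g'(\kappa)$, which has only \emph{one} negative zero, so persistence of $\pm\sqrt{1-g'(\kappa)}$ does not by itself give two negative zeros for small $c>0$ (the second one enters from $-\infty$, since $\chi_\kappa(-\infty)=-\infty$ for every $c>0$ while $\chi_\kappa$ remains positive near $-\sqrt{1-g'(\kappa)}$); and your parity argument still allows the configuration ``three positive zeros, no negative zero'', which must be excluded (e.g.\ any critical point $z_m>0$ satisfies $2z_m-c=chg'(\kappa)e^{-z_mch}<0$, whence $\chi_\kappa(z_m)<-1<0$, so there is no positive local maximum with positive value and the positive zero is unique and simple).

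The second gap is more serious: the complex-zero bounds, which are the part of the lemma the paper actually uses later (in the proof of Lemma~\ref{fl}, to conclude that oscillations carried by complex eigenvalues are rapid), are not proved. Your identity $\chi_\kappa(x)=y^2+g'(\kappa)e^{-xch}\bigl(1-\cos(ych)\bigr)$ is correct, but since $g'(\kappa)\bigl(1-\cos(ych)\bigr)\le 0$ it only gives $\chi_\kappa(x)\le y^2$, which is perfectly compatible with $x\in(\lambda_2,\lambda_3)$ where $\chi_\kappa<0$; so ``combined with the sign pattern \dots should force $x<\lambda_2$'' is an assertion, not a deduction. For the bound $|\Im\lambda_j|>2\pi/(ch)$ you explicitly defer to an unexecuted case analysis, and this is precisely the hard core of the lemma. (A minor but propagating slip: the imaginary part reads $(2x-c)y=g'(\kappa)e^{-xch}\sin(ych)$, without the extra minus sign, since $e^{-iych}=\cos(ych)-i\sin(ych)$.) As it stands the proposal establishes the real-root structure modulo the connectedness issue, but not the last sentence of the statement; you would need either to carry out the trigonometric elimination in full, or to cite \cite[Lemma~1.1]{FGT} and \cite[Lemma~21]{TTT} as the paper does.
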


By \cite[Theorem 4.5]{TT}, for each $(h,c) \in \mathcal{D}_{\frak L}$, equation (\ref{twe0}) has at least one semi-wavefront solution (i.e. positive bounded solution $\phi(t)$ such that $\phi(-\infty)=0$. As it was established in \cite[Lemma 4.3]{TT}, each semi-wavefront for equation (\ref{17}) has the following separation property $\liminf_{t \to +\infty}\phi(t) >0$. The latter inequality  is also sometimes  considered in the definition of a semi-wavefront, cf. \cite{BNPR}). 
To make this semi-wavefront converge to $\kappa$ at $+\infty$, we will impose an additional condition on $g, h, c$ described in the next proposition.
This condition is given in terms of $g$ and a new piece-wise linear unimodal function $\sigma: [g^2(\theta), g(\theta)] \to [g^2(\theta), g(\theta)]$  defined by $\sigma(x) = \zeta^{-1} ((1-\xi)g(x))$, where
$$
\xi = \xi(h,c)= \frac{z_2 - z_1}{z_2 e^{-chz_1}- z_1 e^{-chz_2}} \in [e^{-h},1],  \quad  \zeta(x)  = x - \psi(x),
$$
$\psi: [g^2(\theta), g(\theta)] \to [\theta_1, g(\theta)]$ is the inverse of $g$ restricted to the interval $[\theta_1, g(\theta)]$,
and $z_1(c) < 0 < z_2(c)$ are the  roots of the equation $z^2-cz-1=0$.
\begin{proposition} \label{PA3} Assume {\rm \bf(UM)} and the following global stability condition
\begin{description}
\item[{\rm \bf(GA)}]   $\kappa$ is the globally attracting fixed point for at least one of the following  two one-dimensional maps
$g, \sigma: [g^2(\theta), g(\theta)] \to [g^2(\theta), g(\theta)]$.
\end{description}
Then every semi-wavefront solution of (\ref{twe0}) converges to $\kappa$  at infinity: $\phi(+\infty) = \kappa$.
\end{proposition}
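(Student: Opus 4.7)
The strategy is to apply the classical limsup/liminf (fluctuation) method to an integral reformulation of (\ref{twe0}) and then to use (GA) to close the iteration. Let $\phi$ be a semi-wavefront and set $m := \liminf_{t\to+\infty}\phi(t)$ and $M := \limsup_{t\to+\infty}\phi(t)$; positivity of $m$ is granted by \cite[Lemma 4.3]{TT}. The first step is to rewrite $\phi'' - c\phi' - \phi = -g(\phi(\cdot - ch))$ through the bounded Green function of the operator $\partial^2 - c\partial - 1$: with $z_1 < 0 < z_2$ the real roots of $z^2 - cz - 1 = 0$, every bounded continuous solution obeys
\[
\phi(t) = \frac{1}{z_2 - z_1}\left(\int_{-\infty}^{t} e^{z_1(t-s)} g(\phi(s-ch))\,ds + \int_{t}^{+\infty} e^{z_2(t-s)} g(\phi(s-ch))\,ds\right),
\]
and the associated kernel is a probability density on $\mathbb{R}$. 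Combined with $g(u)\leq g(\theta)$ this already forces $\phi(t)\leq g(\theta)$, hence $M\leq g(\theta)$.

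Next I would localise $[m,M]$ inside the interval $I := [g^2(\theta), g(\theta)]$ on which both $g$ and $\sigma$ act. Testing the integral representation along sequences $t_n\to+\infty$ with $\phi(t_n)\to m$ and $\phi(t_n)\to M$, and using unimodality of $g$, yields the fluctuation inequalities
\[
m \geq \min_{x\in[m,M]} g(x), \qquad M \leq \max_{x\in[m,M]} g(x).
\]
A short case analysis based on the positions of $\theta$ and $\kappa$ relative to $[m,M]$, together with (UM), upgrades the first bound to $m \geq g^2(\theta)$, so that $[m,M] \subseteq I$ and both one-dimensional dynamics may legitimately be iterated on $I$.

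The last step turns these crude bounds into a genuine contraction via (GA). If $\kappa$ is globally attracting for $g$ on $I$, one iterates the fluctuation argument: setting $M_0 = M$, $m_0 = m$ with $[m_n,M_n]\subseteq I$, a fresh application of the integral representation gives $M_{n+1} \leq \max_{[m_n,M_n]} g$ and $m_{n+1} \geq \min_{[m_n,M_n]} g$, so that global attractivity of $\kappa$ for $g|_I$ makes the nested intervals collapse to $\{\kappa\}$, forcing $m = M = \kappa$. If instead only $\sigma$ attracts globally, the plain $g$-iteration can stall, and one has to extract a sharper contraction from the differential structure of (\ref{twe0}) on windows of length $ch$. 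Following the approach of \cite{GT, TTT}, I would work at a large local maximum $t_\ast$ of $\phi$: the inequality $\phi(t_\ast) \leq g(\phi(t_\ast - ch))$ coming from $\phi''(t_\ast)\leq 0$ and $\phi'(t_\ast)=0$ places $\phi(t_\ast - ch)$ on the right branch of $g$, so $\phi(t_\ast - ch) \geq \psi(\phi(t_\ast))$; splitting the integral representation of $\phi(t_\ast)$ into the delay window $[t_\ast-ch, t_\ast]$ and its complement, and carrying out the Green-kernel integration explicitly, produces precisely the constant $\xi(h,c) = (z_2-z_1)/(z_2 e^{-chz_1} - z_1 e^{-chz_2})$, after which the auxiliary identity $\zeta(x) = x - \psi(x)$ converts the combined maximum/minimum estimate into a one-dimensional recursion of the form $M_{n+1} \leq \sigma(M_n)$. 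The main obstacle is exactly this last point: correctly reading off the sharp constant $\xi$ from the Green-kernel integral over the delay window, and verifying that the right-branch inversion $\psi$ remains admissible along the iteration, which is why the preliminary localisation of $[m,M]$ inside $I$ is indispensable.
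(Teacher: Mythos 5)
First, note that the paper does not actually prove Proposition \ref{PA3}: it defers entirely to the main part of the proof of Theorem 5.1 in \cite{TT}. Your architecture --- the integral reformulation through the bounded Green kernel of $\partial^2-c\partial-1$, the fluctuation bounds on $m=\liminf_{t\to+\infty}\phi$ and $M=\limsup_{t\to+\infty}\phi$, the localisation of $[m,M]$ inside $[g^2(\theta),g(\theta)]$, and the final appeal to {\rm\bf(GA)} through either $g$ or $\sigma$ --- is exactly the architecture of that cited proof, so you have identified the right approach and the correct role of the objects $\xi$, $\zeta$, $\psi$, $\sigma$ introduced just before the statement.

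Two issues remain, one of which is a genuine logical gap. (i) The closing step in the $g$-branch is argued with the wrong mechanism. The fluctuation inequalities give $m\geq\min_{[m,M]}g$ and $M\leq\max_{[m,M]}g$, i.e. $[m,M]\subseteq g([m,M])$ (the continuous image of an interval is an interval, so the hull \emph{is} the image). Re-applying the integral representation does not generate a decreasing sequence $M_n$: the numbers $m$ and $M$ are fixed, and iterating the inclusion only yields $[m,M]\subseteq g^n([m,M])$, a chain of \emph{expanding} intervals --- nothing ``collapses''. To conclude $m=M=\kappa$ you must invoke the separate (standard, but not free) lemma that a continuous self-map of a compact interval with a globally attracting fixed point $\kappa$ admits no nondegenerate compact interval $J$ with $J\subseteq g(J)$ --- equivalently, has no $2$-cycles; applied to the invariant interval obtained as the closure of $\bigcup_n g^n([m,M])$, this forces $[m,M]=\{\kappa\}$. (ii) The $\sigma$-branch, which is the case actually needed when $g$ alone is not globally attracting and the whole reason $\xi$, $\zeta$, $\psi$ appear, is left as a plan: you indicate where $\xi$ should come from but do not carry out the splitting of the Green-kernel integral over the delay window $[t_\ast-ch,t_\ast]$, nor verify that $\zeta$ is invertible on the relevant range and that the right-branch inversion $\psi$ stays admissible along the iteration. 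As written, the proposal is a correct outline of the cited argument rather than a complete proof; the concrete error to repair is the ``nested intervals collapse'' step, and the concrete omission is the derivation of the $\sigma$-recursion.
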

\begin{proof} A demonstration of this result constitutes the main part of the proof of Theorem 5.1 from \cite{TT}.  
 \hfill $\square$
\end{proof}
\begin{remark}
Note that for  the birth function  $g$ defined by (\ref{dg}) the hypotheses of Proposition \ref{PA3} can be easily verified since the continuous graphs of both $\sigma$ and $g$ are piecewise linear.  In order to verify the hypothesis  {\rm \bf(GA)} in the case of  unimodal  
 $C^3$-smooth birth functions, the authors of \cite{TT} have systematically used  the criterion of the negative Schwarz derivative.  
\end{remark}
The above discussion leads to our first auxiliary  result:
\begin{lemma}\label{fl}
Suppose that  the hypotheses {\rm \bf(UM)}, {\rm \bf(FC)} and {\rm \bf(GA)} are satisfied and that $(h,c) \in \mathcal{D}_{\frak L}$. Then  there exists at least one  traveling front  $u(t,x) = \phi(x\cdot \nu +ct), \ |\nu|=1,$  to  equation (\ref{17}) and its profile $\phi$  must be eventually monotone.
\end{lemma}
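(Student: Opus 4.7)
The plan is to string together three already-available ingredients and then exclude the slowly oscillating alternative by a direct spectral argument. First, since $(h,c)\in\mathcal{D}_{\frak L}$, I would invoke \cite[Theorem 4.5]{TT} to obtain a semi-wavefront $\phi$ of (\ref{twe0}); hypothesis (GA) combined with Proposition \ref{PA3} then promotes this semi-wavefront to a genuine wavefront by forcing $\phi(+\infty)=\kappa$. Because (UM) and (FC) hold (so, in particular, $g'(\kappa)<0$), Proposition \ref{PA2} applies and implies that $\phi$ is eventually either monotone or slowly oscillating around $\kappa$. The entire argument therefore reduces to ruling out slow oscillation when $(h,c)\in\mathcal{D}_{\frak L}$.

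To this end I would analyse the tail behaviour of $\psi(t):=\phi(t)-\kappa$ as $t\to+\infty$. Because $\psi\to 0$ and $g$ is $C^1$ at $\kappa$, $\psi$ satisfies the linearisation of (\ref{twe0}) at $\kappa$ up to an exponentially small nonlinear correction, and the associated characteristic function is precisely $\chi_\kappa$. A standard spectral-projection / exponential-dichotomy argument of the type employed in \cite{GT, FGT, TTT}, or equivalently an Ikehara-type Tauberian theorem for the Laplace transform of $\psi$, yields an asymptotic expansion $\psi(t)\sim C\,e^{\lambda t}$ as $t\to+\infty$, where $\lambda$ is a characteristic root of $\chi_\kappa$ with $\Re\lambda<0$ and $C\neq 0$.

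Now I invoke the geometry of $\mathcal{D}_{\frak L}$. By Definition \ref{DEF5} together with Lemma \ref{lc2}, the condition $(h,c)\in\mathcal{D}_{\frak L}$ forces $c\leq c^*$, so that $\chi_\kappa$ possesses two real negative roots $\lambda_2\leq\lambda_1<0$, while every complex root $\lambda_j$ satisfies $\Re\lambda_j<\lambda_2$ and $|\Im\lambda_j|>2\pi/(ch)$. If the $\lambda$ in the expansion above is real, then $\psi$ is eventually of constant sign, contradicting oscillation; if $\lambda$ is complex, then $\psi$ oscillates asymptotically with period $2\pi/|\Im\lambda|<ch$, so for every sufficiently large $t$ the translate $\bar\phi_t$ acquires strictly more than two sign changes on $[-ch,0]$, contradicting $\mathrm{sc}(\bar\phi_t)\leq 2$ from the definition of slow oscillation. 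Either alternative is impossible, so $\phi$ must be eventually monotone.

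The main obstacle in this programme is the rigorous extraction of the asymptotic $\psi(t)\sim C\,e^{\lambda t}$ with a non-vanishing coefficient attached to the genuinely leading characteristic root: one must project $\psi$ onto the finite-dimensional invariant subspaces associated with $\chi_\kappa$ and use the positivity of $\phi$ and the separation property $\liminf_{t\to+\infty}\phi(t)>0$ to exclude degenerate cancellations. This step parallels analogous reductions carried out in \cite{FGT} and \cite{TTT} for closely related monostable delayed equations and should go through here without essential modifications.
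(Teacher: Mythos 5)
Your overall strategy coincides with the paper's: existence of a semi-wavefront from \cite[Theorem 4.5]{TT}, promotion to a wavefront via Proposition \ref{PA3} under {\rm \bf(GA)}, reduction to the monotone/slowly-oscillating dichotomy via Proposition \ref{PA2} under {\rm \bf(FC)}, and exclusion of slow oscillation by showing that the tail of $\phi-\kappa$ is governed by a characteristic root of $\chi_\kappa$ which, for $(h,c)\in\mathcal{D}_{\frak L}$ and by Lemma \ref{lc2}, is either real negative (hence no oscillation) or complex with $|\Im\lambda_j|>2\pi/(ch)$ (hence rapid, not slow, oscillation). This is exactly the paper's argument.

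The one place where your plan goes off target is the ``degenerate cancellation'' step you flag at the end. Positivity of $\phi$ and the separation property $\liminf_{t\to+\infty}\phi(t)>0$ do not exclude a vanishing leading coefficient: $\phi-\kappa$ changes sign by the very hypothesis you are refuting, and the separation property is automatic once $\phi(+\infty)=\kappa$. The tool the paper actually uses is the theory of small solutions: because the birth function is linear near $\kappa$ (it is piecewise linear by construction), $y=\phi-\kappa$ satisfies the autonomous linear equation $y''-cy'-y+k_3\,y(t-ch)=0$ \emph{exactly} for all large $t$, and by \cite[Theorem 3.1, p.~76]{hale} any super-exponentially decaying solution of such an equation vanishes identically for large $t$; substituting $\phi\equiv\kappa$ back into (\ref{twe0}) then contradicts the existence of a leftmost such time. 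For the same reason, your appeal to ``$g$ is $C^1$ at $\kappa$'' plus an ``exponentially small nonlinear correction'' is weaker than what is required: the argument, as the paper runs it, leans on the exact linearity of $g$ near $\kappa$ so that the tail equation is genuinely autonomous linear and the small-solution theorem applies directly; for a merely $C^1$ nonlinearity this step would need additional justification. Once super-exponential decay is excluded, the finite eigenfunction expansion is available and the rest of your argument (oscillation forces the real-mode coefficients $a_1=a_2=0$, leaving a leading term that oscillates with quasi-period less than $ch$) matches the paper's conclusion.
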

\begin{proof}  As we have already mentioned, the existence of at least one semi-wavefront $\phi$ for (\ref{17}) is assured by  \cite[Theorem 4.5]{TT}.  Due to Proposition \ref{PA3},  this semi-wavefront is actually a wavefront. Therefore we only have to prove the eventual monotonicity of $\phi$.  Suppose, on the contrary, that $\phi(t)$ is oscillating around $\kappa$. Since the feedback condition {\rm \bf(FC)} is satisfied, Proposition \ref{PA2} shows that these decaying oscillations should be slow. In addition,
we claim that the convergence of $\phi(t)$ to $\kappa$ is not super-exponential. Indeed, by our construction,  the difference $y(t): = \phi(t) - \kappa$ satisfies the linear homogeneous equation
\begin{equation}\label{ey}
y''(t) - cy'(t) - y(t) +k_3 y(t-ch) =0,
\end{equation}
for all sufficiently large positive  $t$.  Therefore, if $y(t)$ is a small (i.e. super-exponentially decaying) solution of  (\ref{ey}), it should be identically zero for all large positive $t$, see Theorem 3.1 in \cite[p. 76]{hale}. In this way, there exists a leftmost $T$ such that $\phi(t) = \kappa$ for all $t \geq T$. But then, by using equation (\ref{twe0}), we easily get a contradiction since $\phi(t) = \kappa$ for all $t \geq T-ch$.

Now, since $y(t)$ is not a small solution of (\ref{ey}), it can be approximated by a finite linear combination of the eigenfunctions
$$
y(t) = a_1e^{\lambda_1 t} + a_2 e^{\lambda_2 t} +a_j e^{\Re \lambda_3 t}\sin (\Im \lambda_j t + a_4) + O(e^{(\Re \lambda_3-\delta)t}),
$$
where $a_1, a_2, a_j \in R$ and $a_j \not =0$, and $\delta >0$ is sufficiently small, e.g. see \cite[Theorem 6.7]{BC}. From our assumption about the oscillatory
behavior of $\phi$, we deduce that actually $a_1=a_2 =0$. Recalling now that $\Im |\lambda_ j |> 2\pi/(ch) $, we obtain that $\phi(t) = \kappa + a_j e^{\Re \lambda_3 t}\sin (\Im \lambda_j t + a_4) + O(e^{(\Re \lambda_3-\delta)t})$ is rapidly oscillating about $\kappa$, a contradiction. \hfill $\square$
\end{proof}
Before announcing our next result, we recall that, by Proposition \ref{PA2},  the leading part of the wavefront is monotone between the equilibria. Since, in addition,  $\phi_s(t):=\phi(t+s)$ solves (\ref{twe0}) for each fixed $s$, there is no loss of generality in assuming that $\phi(0)=\theta \in (0,\kappa)$ and that $\phi'(t) >0$ for all $t \leq 0$. As a consequence, $\phi(t)$ satisfies the  linear homogeneous equation
\begin{equation}\label{ey2}
y''(t) - cy'(t) - y(t) +k_1 y(t-ch) =0,
\end{equation}
for all $t \leq ch$. This fact allows us to find an almost complete representation of $\phi$ for $t \leq ch$ described by the following lemma.
\begin{lemma}\label{fl2} In addition to all assumptions of Lemma \ref{fl}, 
suppose that $\phi(0) = \theta$ and that  $\mu_2 \leq \mu_1$ are as in Definition \ref{DEF5}. Let the unimodal continuous function $g$ defined by (\ref{dg}) have the shape as shown in Fig. 1. Then, for all $t \leq ch$,   it holds that
\begin{equation}\label{rep1}
\phi(t) = pe^{\mu_2 t} + (\theta -p) e^{\mu_1 t}\ \ \mbox{ if}  \ \mu_2 < \mu_1,  \quad \phi(t) = - qt e^{\mu_1 t} + \theta e^{\mu_1 t}\ \mbox{ if}  \ \ \mu_2 = \mu_1,
\end{equation}
for some  $p, q $ satisfying the inequalities
\begin{equation}\label{rep2}
\theta < p \leq \frac{\mu_1\theta}{\mu_1 - \mu_2e^{-ch(\mu_1-\mu_2)}}, \quad 0<q \leq \frac{\mu_1\theta}{1+ \mu_1ch}.
\end{equation}
\end{lemma}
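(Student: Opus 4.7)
The plan is to establish Lemma \ref{fl2} in three stages: reduce $\phi$ to solving a linear scalar delay equation on the half-line $(-\infty,ch]$; extract the explicit two-mode representation (\ref{rep1}) from the characteristic analysis of $\chi_0$; and finally read off the inequalities (\ref{rep2}) from the geometric shape constraints supplied by Proposition \ref{PA2} and Lemma \ref{fl}.

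First I would observe that $\phi(s)\leq \theta$ for every $s\leq 0$, since $\phi(0)=\theta$ and $\phi'(t)>0$ on $(-\infty,0]$ by hypothesis. Hence whenever $t\leq ch$, the delayed argument $\phi(t-ch)$ lies in $[0,\theta]$, so the piecewise formula (\ref{dg}) gives $g(\phi(t-ch))=k_1\phi(t-ch)$; substituting into (\ref{twe0}) shows that $\phi$ satisfies the linear homogeneous equation (\ref{ey2}) throughout $(-\infty,ch]$. The characteristic function of (\ref{ey2}) is precisely $\chi_0(z)=z^2-cz-1+k_1 e^{-zch}$, which, under the assumption $(h,c)\in \mathcal{D}_{\mathfrak{L}}$, has exactly two positive real zeros $\mu_2\leq \mu_1$. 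By an argument parallel to Lemma \ref{lc2} applied to $\chi_0$ in place of $\chi_\kappa$, every complex zero of $\chi_0$ has real part strictly smaller than $\mu_2$. Because $\phi$ is bounded on $(-\infty,ch]$ with $\phi(-\infty)=0$, modes $e^{\lambda t}$ with $\Re\lambda\leq 0$ cannot contribute, while any complex mode with $0<\Re\lambda<\mu_2$ would dominate $e^{\mu_2 t}$ asymptotically and force $\phi$ to oscillate around $0$, contradicting positivity. Therefore the representation of $\phi$ on $(-\infty,ch]$ reduces to a combination of the two real positive modes, and the single normalization $\phi(0)=\theta$ fixes the coefficients up to one free real parameter, yielding (\ref{rep1}) (with the $t\,e^{\mu_1 t}$ companion mode in the confluent case $\mu_1=\mu_2$).

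For the inequalities (\ref{rep2}), I would invoke Proposition \ref{PA2}: under {\rm \bf(FC)} and $\tau_0=0$, the first local maximum of $\phi$ occurs at some $\tau_1\geq ch$, so $\phi'(ch)\geq 0$. Differentiating (\ref{rep1}) and evaluating at $t=ch$ yields $p\mu_2 e^{\mu_2 ch}+(\theta-p)\mu_1 e^{\mu_1 ch}\geq 0$, which rearranges to the stated upper bound $p\leq \mu_1\theta/(\mu_1-\mu_2 e^{-ch(\mu_1-\mu_2)})$; the analogous computation in the double-root case gives $q\leq \mu_1\theta/(1+\mu_1 ch)$. For the strict lower estimates $p>\theta$ and $q>0$, the identity $\mu_1\phi(t)-\phi'(t)=p(\mu_1-\mu_2)e^{\mu_2 t}$ (which is immediate from (\ref{rep1}) and holds throughout $(-\infty,ch]$) evaluated at $t=0$ gives $p=(\mu_1\theta-\phi'(0))/(\mu_1-\mu_2)$, so $p>\theta$ is equivalent to $\phi'(0)<\theta\mu_2$. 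I would prove these by contradiction: if $p\leq \theta$ (resp.\ $q\leq 0$), then (\ref{rep1}) writes $\phi$ on $(-\infty,ch]$ as a non-negative combination of strictly increasing and convex exponentials, and continuing such a profile beyond $t=ch$ in the $k_2$-linear regime of (\ref{twe0}) prevents $\phi$ from overshooting $\kappa$ and subsequently returning to $\kappa$ at $+\infty$, contradicting the non-monotone eventually-monotone character supplied by Lemma \ref{fl}.

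I expect stage two to be the principal obstacle: ruling out every complex characteristic mode and assembling the two-term representation from a rigorous spectral decomposition of (\ref{ey2}) on a half-line is considerably more delicate than the corresponding initial-value Cauchy theory. The strict lower bounds in stage three are the secondary difficulty, because they draw their force from the global shape input of Lemma \ref{fl} rather than from a purely local calculation at $t=ch$.
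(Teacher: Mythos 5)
Your stages one and two follow the paper's route, but stage two is left with the exact hole the paper closes by a specific tool: to speak of ``modes contributing'' to $\phi$ on $(-\infty,ch]$ you must first know that $\phi$ admits a finite eigenfunction expansion near $-\infty$ at all. The paper gets this from the non-existence of small (super-exponentially decaying) solutions of the autonomous linear equation (\ref{ey2}) (Theorem 3.1 in Hale--Verduyn Lunel, p.~76, already used in Lemma \ref{fl}), after which the finite sum over roots with positive real part, the dominance of the real roots $\mu_2\le\mu_1$ over all complex roots, and the positivity of $\phi$ give (\ref{rep1}) exactly as you sketch. You correctly identify this as the principal obstacle but do not supply the small-solutions argument, so as written the representation (\ref{rep1}) is not established.

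The more serious defect is your treatment of the strict lower bounds $p>\theta$ and $q>0$. Your contradiction argument appeals to ``the non-monotone eventually-monotone character supplied by Lemma \ref{fl}'', but Lemma \ref{fl} supplies only eventual monotonicity; non-monotonicity is a \emph{downstream} conclusion of Theorem \ref{main2}, obtained from the bounds of Lemma \ref{fl2} together with condition (\ref{11}), so invoking it here is circular. Moreover the mechanism you propose --- that a convex increasing profile on $(-\infty,ch]$ ``prevents $\phi$ from overshooting $\kappa$'' when continued through the $k_2$-regime --- is asserted, not proved, and there is no evident elementary reason it should hold. The paper instead computes the coefficients explicitly: writing $f(s)=g'(0)\phi(s-ch)-g(\phi(s-ch))\ge 0$ (which vanishes for $s\le ch$ and tends to $(g'(0)-1)\kappa>0$), the bilateral Laplace transform and a residue evaluation at the simple (resp.\ double) root $\mu_1$ give
$\theta-p=-\chi'(\mu_1)^{-1}\int_{\R}e^{-\mu_1 s}f(s)\,ds<0$ and $q=2\chi''(\mu_1)^{-1}\int_{ch}^{\infty}e^{-\mu_1 s}f(s)\,ds>0$,
from which the signs are immediate. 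Your upper bounds via $\phi'(ch)\ge 0$ (Proposition \ref{PA2} with $\tau_1-\tau_0\ge ch$) do coincide with the paper's argument and are fine.
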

\begin{proof}  Since $\phi(-\infty) =0$ and since $\phi(t)$ is not a small solution at $-\infty$ by Theorem 3.1 in \cite[p. 76]{hale}, we find that  $\phi$ can be represented by a finite sum
$$
\phi(t) = \sum_{\Re \mu_j >0} c_je^{\mu_j t}, \ t \leq 0, \ \mbox{ if}  \ \mu_2 < \mu_1,  \quad  \phi(t) = c_0te^{\mu_1 t} + \sum_{\Re \mu_j >0} c_je^{\mu_j t}, \ t \leq 0, \  \mbox{ if}  \ \ \mu_2 = \mu_1,
$$
where $\mu_j$ are roots of the characteristic equation $z^2 -cz -1 + g'(0)e^{-zch}=0$ with the positive
real parts (it is a well known fact that the set of all such roots is finite, cf. \cite{hale}).  Now, since $\Re\mu_j < \mu_2 \leq \mu_1$ for each $j < 2$, we find that, in fact,
$$\phi(t) = c_2e^{\mu_2 t} + c_1e^{\mu_1 t},\ t \leq 0, \ \mbox{ if}  \ \mu_2 < \mu_1,  \quad  \phi(t) = c_0te^{\mu_1 t} + c_1e^{\mu_1 t}, \ t \leq 0, \  \mbox{ if}  \ \ \mu_2 = \mu_1.
$$ Indeed, otherwise
$\phi(t)$ will oscillate at $-\infty$.  Taking into account  that $\phi(0)=\theta,$ we obtain formulas (\ref{rep1}).

Next, in order to prove the first inequality for $p$ in (\ref{rep2}), we observe that the coefficient $c_1:= \theta- p$
can be calculated explicitly (e.g. see \cite[Lemma 28]{GT}) with the help of the bilateral Laplace transform:
$$
(\theta - p)e^{\mu_1 t} = - {\rm Res}_{z=\mu_1}\left[ \frac{e^{zt}}{\chi_\kappa(z)}\int_{-\infty}^{+\infty}e^{-zs}f(s)ds \right],
$$
with  $f(s):= g'(0)\phi(s-ch)- g(\phi(s-ch)) \geq 0,\ s \in \R,$ satisfying $f(+\infty)= (g'(0)-1)\kappa >0$ and
$\chi_\kappa(z)= z^2-cz-1+k_1e^{-zch}$.  In consequence, since $\mu_1$ is a simple zero of $\chi_\kappa$ and $\chi'(\mu_1) >0$, we find that
$$
\theta - p = -  \frac{1}{\chi_\kappa'(\mu_1)}\int_{-\infty}^{+\infty}e^{-\mu_1s}f(s)ds <0.
$$
Finally, Proposition \ref{PA2}  guarantees that $\phi'(t) >0$ for all $t \in [0, ch)$, see also \cite[Lemma 10]{TTT}.  In particular, $\phi'(ch)\geq 0$  which amounts to the second inequality for $p$  in (\ref{rep2}).

Using the obtained restrictions on $p$, we easily find that, if  $\mu_2 < \mu_1,$ then
\begin{equation}\label{inf}
\inf_{p}\max_{t\in [0,ch]}\phi(t,p) = \inf_{p} \phi(ch,p) = \frac{g(\theta)}{1+\mu_1\mu_2},
\end{equation}
where $\phi(t,p): =  pe^{\mu_2 t} + (\theta -p) e^{\mu_1 t}$ and the
$\inf$ is taken over the admissible interval for $p$ given by (\ref{rep2}).  In particular, each non-minimal  wavefront should satisfy (\ref{inf}).

Similarly, if $\mu_1=\mu_2$, we obtain
$$
(- qt + \theta)e^{\mu_1 t} = - {\rm Res}_{z=\mu_1}\left[ \frac{e^{zt}}{\chi_\kappa(z)}\int_{-\infty}^{+\infty}e^{-zs}f(s)ds \right],
$$
and therefore
$$q=   \frac{2}{\chi_\kappa''(\mu_1)}\int_{ch}^{+\infty}e^{-\mu_1s}f(s)ds >0.$$
Now, the inequality for $q$ in (\ref{rep2}) is equivalent to the above mentioned property $\phi'(ch) \geq 0$ satisfied by each wavefront.
 \hfill $\square$
\end{proof}
\begin{corollary} \label{crot10} Let all assumptions of Lemma \ref{fl2} be satisfied and $c > c_*$. Then
\begin{equation} \label{sic2}
\phi(ch, c)   \geq \frac{g(\theta)}{1+\mu_1(c)\mu_2(c)}.
\end{equation}
\end{corollary}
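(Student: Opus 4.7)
The plan is to combine the representation of $\phi$ on $(-\infty, ch]$ provided by Lemma~\ref{fl2} with a monotonicity argument in the free parameter $p$, and then identify the extremal value of $\phi(ch)$ via the delay equation (\ref{twe0}) at $t = ch$.

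Since the hypothesis $c > c_*$ guarantees $\mu_2 < \mu_1$ strictly (at $c_*$ the two positive roots coalesce), I would start from the non-degenerate representation $\phi(t) = p e^{\mu_2 t} + (\theta - p) e^{\mu_1 t}$ on $(-\infty, ch]$, with $p$ ranging over the admissible interval $(\theta, p_{\max}]$ given by (\ref{rep2}), where $p_{\max} = \mu_1 \theta/(\mu_1 - \mu_2 e^{-ch(\mu_1 - \mu_2)})$. Evaluating at $t = ch$ yields $\phi(ch) = \theta e^{\mu_1 ch} + p(e^{\mu_2 ch} - e^{\mu_1 ch})$, which is strictly decreasing in $p$ because $\mu_2 < \mu_1$ implies $e^{\mu_2 ch} < e^{\mu_1 ch}$. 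This monotonicity reduces the claim to computing $\phi(ch)$ at the right endpoint $p = p_{\max}$.

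The key short calculation then has two ingredients. First, $p = p_{\max}$ is algebraically equivalent to $\phi'(ch) = 0$ (this is the very property used in the proof of Lemma~\ref{fl2} to derive the bound on $p$). Second, using $\phi'(ch) = 0$ in the explicit formula $\phi''(t) = p\mu_2^2 e^{\mu_2 t} + (\theta-p)\mu_1^2 e^{\mu_1 t}$ to eliminate the $e^{\mu_1 ch}$-coefficient gives $\phi''(ch) = -\mu_1\mu_2\,\phi(ch)$. On the other hand, evaluating (\ref{twe0}) at $t = ch$ with $\phi(0)=\theta$ and $\phi'(ch)=0$ yields $\phi''(ch) = \phi(ch) - g(\theta)$. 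Equating the two expressions produces $\phi(ch)(1+\mu_1\mu_2) = g(\theta)$, which together with the monotonicity step is precisely the asserted bound (\ref{sic2}).

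I do not anticipate a serious obstacle: the potentially delicate degeneracy $\mu_1 = \mu_2$ is ruled out by $c > c_*$, so the non-degenerate representation from Lemma~\ref{fl2} applies throughout, and the remaining verification is a short manipulation of the characteristic exponents and the delay equation evaluated at one point.
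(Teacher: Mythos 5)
Your proof is correct and follows essentially the same route as the paper: write $\phi(ch) = \theta e^{\mu_1 ch} + p\,(e^{\mu_2 ch} - e^{\mu_1 ch})$, observe that this is decreasing in $p$ because $\mu_2 < \mu_1$ for $c > c_*$, and evaluate at the admissible endpoint $p_{\max}$ from (\ref{rep2}). The only difference is in how the endpoint identity $\phi(ch, p_{\max}) = g(\theta)/(1+\mu_1\mu_2)$ is justified: the paper asserts it directly (it follows by algebra using the characteristic equation), while you derive it cleanly from $\phi'(ch)=0$, the consequent relation $\phi''(ch) = -\mu_1\mu_2\,\phi(ch)$, and equation (\ref{twe0}) evaluated at $t = ch$ — both verifications are valid.
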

\begin{proof}  To prove (\ref{sic2}), it suffices to use the expression for $\phi$ in (\ref{rep1}) and the inequality for $p$ in (\ref{rep2}):
$$
 \phi(ch, c) = p(e^{\mu_2 ch}-  e^{\mu_1 ch})  + \theta e^{\mu_1 ch}\geq \frac{\mu_1\theta}{\mu_1 - \mu_2e^{-ch(\mu_1-\mu_2)}}(e^{\mu_2 ch}-  e^{\mu_1 ch}) + \theta e^{\mu_1 ch}=\frac{g(\theta)}{1+\mu_1(c)\mu_2(c)}.
$$
 \hfill $\square$
\end{proof}
\begin{corollary} \label{crot11}Assume, in addition to conditions of Lemma \ref{fl2},  that each admissible wavefront to equation (\ref{17}) is unique (up to translation). Then
\begin{equation} \label{sic}
\phi(c_*h, c_*)   \geq \frac{g(\theta)}{1+\mu_1^2(c_*)}, \quad 0< q \leq \frac{\theta-g(\theta) e^{-\mu_1(c_*) c_*h}/(1+\mu_1^2(c_*))}{c_*h}.
\end{equation}
\end{corollary}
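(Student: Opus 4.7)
The plan is to derive both bounds in (\ref{sic}) by passing to the limit in Corollary \ref{crot10} as $c \downarrow c_*$ and then plugging into the explicit formula of Lemma \ref{fl2} evaluated at the minimal speed, where the eigenvalues $\mu_1, \mu_2$ coalesce.

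First I would establish a continuity statement for the profiles at the boundary $c = c_*$. Fix the normalization $\phi(0,c) = \theta$, which is permissible since equation (\ref{twe0}) is translation-invariant and $\phi$ is strictly increasing near $-\infty$ by Proposition \ref{PA2}. For every $c > c_*$ the profile $\phi(\cdot,c)$ exists and is unique by assumption, and from (\ref{rep1})--(\ref{rep2}) together with equation (\ref{twe0}) the family $\{\phi(\cdot,c)\}$ is uniformly bounded in $C^2_{\rm loc}(\R)$. A standard Arzel\`a--Ascoli/diagonal argument then yields a subsequence $c_n \downarrow c_*$ along which $\phi(\cdot,c_n)$ converges in $C^1_{\rm loc}(\R)$ to a classical solution $\tilde\phi$ of (\ref{twe0}) with $c = c_*$, $\tilde\phi(0)=\theta$. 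Since $\tilde\phi$ is a semi-wavefront (the separation from $0$ at $+\infty$ follows from \cite[Lemma 4.3]{TT} applied uniformly), Proposition \ref{PA3} promotes it to a wavefront, and uniqueness forces $\tilde\phi = \phi(\cdot, c_*)$. In particular, $\phi(c_n h,c_n) \to \phi(c_*h,c_*)$.

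Next, I would pass to the limit in the inequality $\phi(ch,c) \geq g(\theta)/(1+\mu_1(c)\mu_2(c))$ supplied by Corollary \ref{crot10}. The eigenvalues $\mu_j(c)$ depend continuously on $c$ and, by Lemma \ref{lc2} applied to $\chi_0$ in place of $\chi_\kappa$ (the minimal speed $c_*$ is characterized precisely by the coalescence of the two positive roots of $\chi_0$), $\mu_1(c_*) = \mu_2(c_*)$. Hence $\mu_1(c)\mu_2(c) \to \mu_1^2(c_*)$ and the first inequality of (\ref{sic}) follows.

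For the second inequality, I would use the double-root representation from Lemma \ref{fl2}: at $c = c_*$, for all $t \leq c_*h$,
\begin{equation*}
\phi(t, c_*) \;=\; (\theta - qt)\, e^{\mu_1(c_*) t}.
\end{equation*}
Evaluating at $t = c_* h$ gives $\phi(c_*h,c_*) = (\theta - q c_*h)\, e^{\mu_1(c_*) c_* h}$. Combining this with the first inequality of (\ref{sic}), I divide through by $e^{\mu_1(c_*) c_* h}$ and solve for $q$, obtaining exactly the stated bound $q \leq [\theta - g(\theta)e^{-\mu_1(c_*)c_*h}/(1+\mu_1^2(c_*))]/(c_*h)$, while $q>0$ is already part of Lemma \ref{fl2}.

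The delicate step is the continuous dependence of the profile on $c$ at the boundary $c = c_*$: the Arzel\`a--Ascoli extraction produces \emph{some} limiting semi-wavefront at $c_*$, but one must rule out degeneration (loss of the boundary value at $+\infty$, or translation escape to $\pm\infty$) and identify the limit with the unique profile $\phi(\cdot,c_*)$. Both issues are resolved by the normalization $\phi(0,c)=\theta$, the a priori separation $\liminf_{t\to+\infty}\phi(t,c)>0$ from \cite[Lemma 4.3]{TT}, Proposition \ref{PA3}, and the uniqueness hypothesis; after that, the rest of the corollary is a routine limit passage.
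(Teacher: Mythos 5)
Your proposal is correct and follows essentially the same route as the paper: normalize $\phi(0,c)=\theta$, extract a convergent subsequence as $c\downarrow c_*$ via Arzel\`a--Ascoli, identify the limit with $\phi(\cdot,c_*)$ using the separation property, Proposition \ref{PA3} and the uniqueness hypothesis, pass to the limit in the inequality of Corollary \ref{crot10} using $\mu_1(c)\mu_2(c)\to\mu_1^2(c_*)$, and then read off the bound on $q$ from the double-root representation $(\theta-qc_*h)e^{\mu_1(c_*)c_*h}=\phi(c_*h,c_*)$. The only cosmetic difference is that the paper obtains the uniform equicontinuity bounds from the integral representation (\ref{ie1}) rather than from (\ref{rep1})--(\ref{rep2}) and the differential equation, but this does not change the argument.
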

\begin{proof}   Let  $u(t,x) = \phi(x+ct, c), \ \phi(0,c)=\theta,$ be the wavefront propagating at the velocity $c > c_*$. It is easy to see that each profile $\phi(t,c)$ satisfies the integral equation
\begin{equation}\label{ie1}
\phi(t,c) = \frac{1}{z_2
- z_1} \left(\int^t_{-\infty} e^{z_1 (t-s)}g(\phi(s-ch, c))ds +
\int_t^{+\infty}e^{z_2 (t-s)}g(\phi(s-ch,c))ds\right),
\end{equation}
where $z_1< 0<z_2$ are roots  of the  equation $z^2 -c z -1 =0$.  As a consequence, 
$$
\phi'(t,c) = \frac{1}{z_2
- z_1} \left(z_1\int^t_{-\infty} e^{z_1 (t-s)}g(\phi(s-ch, c))ds + z_2
\int_t^{+\infty}e^{z_2 (t-s)}g(\phi(s-ch,c))ds\right),
$$
and a straightforward estimation shows that $|\phi'(t,c)| \leq g(\theta)/\sqrt{c^2+4}$ and $|\phi(t,c)| \leq g(\theta)$. 
Choose now a strictly decreasing sequence $c_j \to c_*$. By the Arzel\`a-Ascoli theorem, the sequence $\phi(t,c_j)$ has a subsequence $\phi(t,c_{j_k})$ which converges, uniformly on compact subsets of $\R$, to the continuous non-negative bounded function $\phi_0(t)$. It is clear that $\phi$ is non-decreasing on $(-\infty, ch]$ and that $\phi_0(0) = \theta$.  By the Lebesgue«s dominated convergence theorem, $\phi_0$ satisfies equation (\ref{ie1}) with $c=c_*$ and therefore  $\phi_0$ is a non-negative profile of a traveling wave propagating with the velocity $c_*$. 
Since the limit value $\phi_0(-\infty) < \theta$ must satisfy equation (\ref{twe0}), we get $\phi_0(-\infty) =0$. This means that actually  $\phi_0(t)$ is a wavefront.  But then,  due to the uniqueness assumption, we have that $\phi_0(t) = \phi(t,c_*)$ and that
$$(-qc_*h+ \theta)e^{\mu_1c_*h} =\phi(c_*h,c_*) = \phi_0(c_*h,c_*) = \lim_{j \to +\infty} \phi(c_jh,c_j) \geq  \lim_{j \to +\infty}  \frac{g(\theta)}{1+\mu_1(c_j)\mu_2(c_j)} = \frac{g(\theta)}{1+\mu_1^2(c_*)}.$$
The inequalities (\ref{sic})  follow easily from  these relations.
 \hfill $\square$
\end{proof}
The above considerations yield the following conclusion:
\begin{thm} \label{main2}
Let the unimodal continuous function $g$ be defined by (\ref{dg}), where $k_2 < k_3 <0 < 1< k_1$.   In addition, suppose that  the hypotheses {\rm \bf(FC)} and {\rm \bf(GA)} are satisfied,  $(h,c) \in \mathcal{D}_{\frak L}$,  while
\begin{equation} \label{11}
\gamma(c):= \frac{g(\theta)}{1+\mu_1(c)\mu_2(c)}> \kappa.
\end{equation}
Then equation (\ref{17})  has a non-empty set of traveling fronts propagating with the speed c (which can be the minimal one). Next,
each such wavefront is non-monotone on $\R$ but eventually monotone.  Furthermore, if either $|k_2| \leq k_1$ or the characteristic equation $z^2-cz-1 +|k_2|e^{-zch} =0$ has two real positive roots (counting multiplicity), then there exists a unique  (up to a translation) wavefront propagating with the velocity $c$.
\end{thm}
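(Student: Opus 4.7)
Existence of a wavefront $\phi$ with an eventually monotone profile is immediate from Lemma~\ref{fl}, because the hypotheses (UM), (FC), (GA), and $(h,c)\in\mathcal{D}_{\frak L}$ are all assumed. To show that $\phi$ is non-monotone on all of $\R$, the plan is to translate so that $\phi(0)=\theta$, invoke the representation (\ref{rep1}), and combine Corollary~\ref{crot10} with the standing hypothesis $\gamma(c)>\kappa$: this gives $\phi(ch)\ge\gamma(c)>\kappa$. Proposition~\ref{PA2} then produces a finite first crossing point $\tau_1$ of $\kappa$ with $\phi(\tau_1)>\kappa$, and since $\phi(+\infty)=\kappa$ (assured by Proposition~\ref{PA3} via (FC) and (GA)), the profile must descend from a value exceeding $\kappa$ back to $\kappa$, precluding global monotonicity.

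\textbf{Reduction of uniqueness to a shooting problem.} Suppose two wavefronts $\phi_1,\phi_2$ exist, both normalized with $\phi_i(0)=\theta$. Since $\phi_i$ is strictly increasing on $(-\infty,\tau_1^{(i)})$ (Proposition~\ref{PA2}), the delayed argument $\phi_i(t-ch)$ stays in $[0,\theta]$ for every $t\le ch$, so $\phi_i$ satisfies the purely linear equation (\ref{ey2}) throughout $(-\infty,ch]$. Extending the non-smallness argument of Lemma~\ref{fl2} from $(-\infty,0]$ to the whole of $(-\infty,ch]$ gives on that interval
\[
\phi_i(t)=p_i e^{\mu_2 t}+(\theta-p_i)e^{\mu_1 t}\quad(\text{or its double-root analogue}),
\]
with $p_i$ in the admissible range (\ref{rep2}). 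Uniqueness thus becomes the question: for at most one admissible $p$ does the piecewise-linear forward continuation $\phi(\cdot;p)$ satisfy $\phi(+\infty;p)=\kappa$? I would attack this by establishing strict monotonicity in $p$ of the tail profile, which rules out two distinct solutions.

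\textbf{Main obstacle.} The monotonicity in $p$ must be propagated through the middle segment of $g$, whose slope $k_2$ is negative --- the usual positivity/comparison arguments do not apply directly there. The extra hypothesis ``either $|k_2|\le k_1$ or $z^2-cz-1+|k_2|e^{-zch}=0$ has two positive real roots'' is tailored exactly for this: either it keeps the $|k_2|$-coefficient dominated by the $k_1$-coefficient that controls the $(-\infty,ch]$ dynamics, or it supplies on the middle segment a characteristic equation of the same structural type as in Definition~\ref{DEF5}. The concrete execution would be a bilateral Laplace transform of $\phi_2-\phi_1$ together with a residue computation analogous to those yielding the formulas for $\theta-p$ and $q$ in Lemma~\ref{fl2}, forcing $p_1=p_2$. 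This Laplace/comparison step, which has to book-keep the switching instants across the three linear segments of $g$, is where the real work lies.
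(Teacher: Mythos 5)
Your first two paragraphs essentially reproduce the paper's argument for existence, eventual monotonicity and non-monotonicity: Lemma~\ref{fl} gives an eventually monotone front, and Corollary~\ref{crot10} together with (\ref{11}) forces $\phi(ch)>\kappa$, hence non-monotonicity. One omission there: Corollary~\ref{crot10} is stated only for $c>c_*$, so the case of the minimal speed (explicitly included in the statement of the theorem) requires the limiting argument of Corollary~\ref{crot11}, which you do not invoke; note that Corollary~\ref{crot11} itself presupposes uniqueness, so the logical order matters.

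The genuine gap is the uniqueness part. What you offer is a programme, not a proof: you reduce to the one-parameter family $\phi(\cdot;p)$ on $(-\infty,ch]$ and then declare that uniqueness would follow from ``strict monotonicity in $p$ of the tail profile,'' while conceding that propagating this monotonicity through the segment of $g$ with negative slope $k_2$ is ``where the real work lies.'' That unexecuted step is precisely the hard part, and there is no indication it can be carried out by the Laplace/residue bookkeeping you sketch; comparison arguments indeed break down on the decreasing branch, and nothing in your outline replaces them. The paper does something entirely different and much shorter: it observes that $g$ is globally Lipschitz with constant $L=\max\{k_1,|k_2|\}$ and then quotes the Diekmann--Kaper-type uniqueness theory of \cite[Theorems 7,8]{AGT}. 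The dichotomy hypothesis ``either $|k_2|\leq k_1$ or $z^2-cz-1+|k_2|e^{-zch}=0$ has two positive real roots'' is exactly the verification that the characteristic function $z^2-cz-1+Le^{-zch}$ built from the Lipschitz constant $L$ has the positive real zeros required by those theorems (in the first case $L=k_1=g'(0)$ and this is already guaranteed by $(h,c)\in\mathcal{D}_{\frak L}$). Your guess that the hypothesis ``supplies a characteristic equation of the same structural type as in Definition~\ref{DEF5}'' points in the right direction, but without invoking (or reproving) the uniqueness theorem of \cite{AGT} the uniqueness claim remains unestablished in your proposal.
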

\begin{proof} By Lemma \ref{fl}, there exists at least one  traveling front  $u(t,x) = \phi(x\cdot \nu +ct), \ |\nu|=1,$  to  equation (\ref{17}) and its profile $\phi$  must be eventually monotone.  On the other hand, Corollaries \ref{crot10}, \ref{crot11} and inequality (\ref{11}) assure that $\phi(ch) > \kappa$ and therefore the profile $\phi$ is non-monotone. Finally,
since
$|g(s_1)- g(s_2)| \leq \max\{k_1, |k_2|\}|s_1-s_2|, \ s \in [0,1]$,   the uniqueness (up to a translation) of the wavefront propagating with the given velocity $c$ follows from \cite[Theorems 7,8]{AGT}.
 \hfill $\square$
\end{proof}
{\bf Proof of Theorem \ref{main1}}:
 Set $k_1=-k_2=3, \ k_3 = - 0.25, \ \theta =1/3, \  h = 2, \ \kappa = 0.53$. Then the minimal speed $c_* = 0.712\dots $ and the critical speed $c^*=0.751\dots$ can be found from the characteristic equations
$z^2 -cz-1+3 e^{-2cz}=0, \ z^2 -cz-1-0.25 e^{-2cz}=0$. Recall that, by definition, $\{2\}\times [c_*,c^*] =   \mathcal{D}_{\frak L} \cap \{2\}\times \R_+$. We also have that $\mu_1(c_*) = \mu_2(c_*) = 0.926\dots$.   It is a well known fact (cf. \cite[Theorem 1.1]{TT}) that equation (\ref{17}) does not have any semi-wavefront  propagating at the velocity 
$c < c_*$. 
Next,  a straightforward (but a little bit tedious) evaluation of $\gamma(c)$ shows that inequality (\ref{11}) holds for each $c \in [c_*,c^*]$. For the completeness,  below we present the proof of this fact:
\begin{lemma} Consider the above defined $g$ and let $h=2$. Then
$$
\gamma(c) > \gamma_1(c): = \frac{1+1.53c^2}{2.55+1.53c^2} >\kappa = 0.53, \quad c \in [c_*,c^*]= [0.712\dots, 0.751\dots].
$$
\end{lemma}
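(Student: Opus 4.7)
The plan is first to reduce the chain of inequalities to two purely algebraic statements. Since the parameter choice gives $g(\theta) = k_1\theta = 3\cdot(1/3) = 1$, we have $\gamma(c) = 1/(1+\mu_1(c)\mu_2(c))$. The right-hand inequality $\gamma_1(c) > 0.53$ clears denominators to $1 + 1.53\, c^2 > 0.53\,(2.55 + 1.53\, c^2)$, i.e.\ $0.7191\, c^2 > 0.3515$, so $c > 0.699\ldots$, which is immediate from $c \geq c_* > 0.712$. All the work therefore lies in the left-hand inequality.

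A short manipulation rewrites $\gamma(c) > \gamma_1(c)$ as the single one-parameter bound
\[
P(c)\,(1 + 1.53\, c^2) < 1.55, \qquad P(c):=\mu_1(c)\mu_2(c).
\]
To control $P(c)$ I would exploit the fact that $\mu_1 \geq \mu_2 > 0$ are the positive roots of the strictly convex function $\chi_0(z) = z^2 - cz - 1 + 3 e^{-2cz}$, so each satisfies $3e^{-2c\mu_j} = 1 + c\mu_j - \mu_j^2$. Multiplying these two identities yields the algebraic relation
\[
9\,e^{-2cS} = P^2 + (2 + c^2 - cS)\,P + 1 + cS - S^2, \qquad S:=\mu_1+\mu_2,
\]
and subtracting them and applying the mean value theorem to $z\mapsto e^{-2cz}$ gives $S > c$. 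Together these pin down $(P,S)$ once $c$ is fixed.

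The proof then proceeds by combining endpoint values with monotonicity of $\mu_j(c)$. At $c = c_*$ the double-root condition $\chi_0(\mu_*) = \chi_0'(\mu_*) = 0$ furnishes the closed form $\mu_*^2 = \mu_*(c_* - 1/c_*) + 3/2$, yielding an explicit value of $P(c_*) = \mu_*^2$ that can be compared directly with $1.55/(1 + 1.53\,c_*^2)$. At $c = c^*$ the two roots separate and are pinned down numerically from $\chi_0 = 0$. Implicit differentiation of $\chi_0(\mu_j,c) = 0$, combined with the opposite signs of $\partial_z\chi_0$ at $\mu_1$ and $\mu_2$, shows $\mu_1'(c) > 0$ and $\mu_2'(c) < 0$ on $(c_*, c^*]$, so both $P$ and the target bound are continuous functions of $c$ with computable one-sided derivatives.

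The main obstacle is the tightness of the estimate: the gap between $P(c)$ and $1.55/(1+1.53\,c^2)$ is of order $10^{-2}$ at $c_*$ and shrinks to only a few times $10^{-3}$ near $c^*$. A soft comparison of endpoint values therefore does not suffice; the proof must quantitatively track $\frac{d}{dc}\bigl[P(c)(1+1.53\,c^2)\bigr]$ through the interval using the implicit differentiation formulas above, or equivalently reduce the claim to a finite-grid verification backed by the Lipschitz constant of $P(c)$ obtained from the implicit function theorem. Either route should close because the strict inequality at the endpoints exceeds the maximum variation one incurs across $[c_*,c^*]$.
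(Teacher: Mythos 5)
Your reduction is correct and matches the paper's starting point: with $g(\theta)=k_1\theta=1$ one has $\gamma(c)=1/(1+\mu_1(c)\mu_2(c))$, the right-hand inequality $\gamma_1(c)>0.53$ is elementary since it only requires $c>0.699\ldots$, and the whole lemma reduces to the uniform bound $\mu_1(c)\mu_2(c)\,(1+1.53\,c^2)<1.55$ on $[c_*,c^*]$. Your symmetric-function identity for $P=\mu_1\mu_2$ and $S=\mu_1+\mu_2$ is also algebraically correct. The problem is that everything after that is a plan rather than a proof: the identity in $(P,S)$ is just a restatement of the two transcendental equations and does not by itself produce an explicit bound on $P$; the endpoint values at $c^*$ are "pinned down numerically" without rigorous enclosures; and the decisive step --- controlling $\frac{d}{dc}\bigl[P(c)(1+1.53c^2)\bigr]$ or a Lipschitz constant well enough to interpolate between endpoints --- is asserted to "close" without any quantitative content. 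Since, as you yourself note, the margin shrinks to a few times $10^{-3}$ near $c^*$, the claim that "the strict inequality at the endpoints exceeds the maximum variation" is exactly what needs to be proved and is nowhere established. Note also that monotonicity of the individual roots cannot settle the matter, since $P$ is a product of an increasing and a decreasing factor.

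The paper closes this gap with a different and fully explicit device that you are missing. Writing $\rho_j(c)=c\mu_j(c)$, the $\rho_j$ are the two roots of $1+z-c^{-2}z^2=3e^{-2z}$; monotonicity of $c\mapsto 1+z-c^{-2}z^2$ shows $\rho_1$ decreases and $\rho_2$ increases, confining both to an explicit interval $[0.537,0.868]$. The transcendental right-hand side $3e^{-2z}$ is then replaced on that interval by a quadratic comparison function $Q(z)$ (a perturbed second-order Taylor polynomial at $\rho_1(c_*)$) satisfying $Q>3e^{-2z}$ to the left of $\rho_1(c_*)$ and $Q<3e^{-2z}$ to the right --- sign conditions checkable on explicit compact intervals. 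This forces the roots $\tilde z_j(c)$ of the resulting \emph{quadratic} equation $1+z-c^{-2}z^2=Q(z)$ to majorize $\rho_j(c)$, and Vieta's formula then gives $\mu_1(c)\mu_2(c)=c^{-2}\rho_1\rho_2<c^{-2}\tilde z_1\tilde z_2=(Q(0)-1)/(1+5.7c^2e^{-2\rho_1(c_*)})$ in closed form, from which $\gamma(c)>\gamma_1(c)$ follows by a one-line rational computation. Some such explicit majorization (or, alternatively, a genuinely carried-out interval/Lipschitz verification with stated constants) is indispensable; without it your argument does not yet prove the lemma.
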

The graph of the function $\gamma_1: [c_*,c^*]  \to \R_+$ is shown in Fig. 3.
\begin{figure}[h]\label{F3}
\centering \fbox{\includegraphics[width=5.5cm]{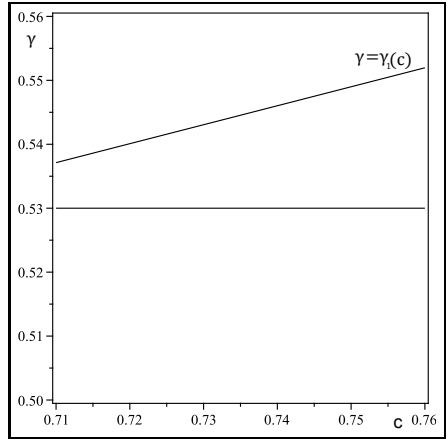}}
\caption{\hspace{0.5cm} Graph of $\gamma_1(c)$ }
\end{figure}
\begin{proof}  Set $\rho_j(c) = c\mu_j(c)$, then $0< \rho_1(c) \leq \rho_2(c)$ are the only two real roots of the equation
$
1+z- c^{-2}z^2  = 3e^{-2z}.
$
A direct computation shows that $\rho_1(c_*) =\rho_2(c_*)= 0.656\dots$ and
$\rho_1(c^*) = 0.537\dots, \ \rho_2(c^*) = 0.867\dots$. Now, for each fixed $z \in \R$ the function $P(c):= 1+z- c^{-2} z^2 $ is strictly increasing on $(0, +\infty)$, and therefore $\rho_1(c)$ is strictly decreasing and $\rho_2(c)$ is strictly increasing on $ [c_*,c^*]$. In particular,  $\rho_1(c), \rho_2(c) \in [\rho_1(c^*), \rho_2(c^*)]\subset [0.537, 0.868]$ for all $c \in [c_*,c^*]$.  Next, let us consider the quadratic polynomial
$$
Q(z)=3\cdot e^{-2 \rho_1(c_*) }\left( 1-2.04(z-\rho_1(c_*))+1.9( z-\rho_1(c_*))^{2} \right),
$$
which is a small deformation of the second order Taylor approximation of the function $y= 3e^{-2z}$ at
$z=\rho_1(c_*)$.  It can be easily verified that $Q(z) > 3e^{-2z}$ for all  $z \in [0.521, \rho_1(c_*))$ and
$Q(z) < 3e^{-2z}$ for all $z \in (\rho_1(c_*), 0.885]$.  As a consequence, for each $c \in [c_*,c^*]$,
the equation
$
1+z- c^{-2}z^2  = Q(z)
$
has exactly two real roots $\tilde z_1(c) > \rho_1(c)$,   $\tilde z_2(c) > \rho_2(c)$.
Therefore $$\mu_1(c)\mu_2(c) = c^{-2}\rho_1(c)\rho_2(c) < c^{-2}\tilde z_1(c)\tilde z_2(c) = \frac{Q(0)-1}{1+5.7c^2e^{-2 \rho_1(c_*)}}= \frac{1.549\dots}{1+5.7c^2e^{-2 \rho_1(c_*)}}$$ and
$\gamma(c) > 1/(1+c^{-2}\tilde z_1(c)\tilde z_2(c)) > \gamma_1(c):= (1+1.53c^2)/(2.55+1.53c^2) > \kappa,$ $c \in [c_*,c^*]= [0.712\dots, 0.751\dots]. $
 \hfill $\square$
\end{proof}
Next, the graph of $g$ shown in Fig. \ref{F2} was drawn  by using the above mentioned data; it is clear from its shape that $\kappa$ is the global attractor of $g$.  Indeed, the
second iteration $g^2: [g^2(\theta), g(\theta)] \to [g^2(\theta), g(\theta)]$ is a piece-wise linear map, which slopes can not exceed $|k_2k_3|= 0.75$ in the absolute value. Thus all the assumptions of Theorem \ref{main2} are satisfied for all  $c \in [c_*,c^*]$, which proves statements a), b) of Theorem \ref{main1}.
Finally, the part c) follows from \cite[Theorem 3]{TTT}.   \hfill $\square$

\begin{remark} It is comforting  to observe that the conclusions of Theorem \ref{main1} agree with the statement of \cite[Remark 2]{GT} which says that, in the case of existence of non-monotone and non-oscillating wavefronts,
the equation  $z^2-c_*z-1-g'_\kappa e^{-2zc_*} =0$,   where
$$
g'_{\kappa}: = \inf_{x\in
(0,\kappa)}(g(x)-g(\kappa))/(x-\kappa) = -2.69\dots,
$$
can not have negative real roots.
\end{remark}
To illustrate our theoretical results, in Fig. \ref{F1} we are presenting a graph of the minimal wavefront ($q= 0.12\dots$). In its derivation we have used the estimate $q \leq 0.13\dots$ which follows  from (\ref{rep2}), (\ref{sic}).  The graph exhibits only one local extremum.
We believe that it is possible to find $g$ defined by (\ref{dg}) such that the associated wavefront will have two critical points.
It seems that the number of the critical points cannot  exceed 2 (at least for piece-wise linear $g$ defined by     (\ref{dg})).  It is an interesting fact that for $c > c^*$ close to $c^*$ (for example, for $c = 0.8$) the visually observable shape of the wavefront profile $\phi$ is almost the same as it appears in Fig. \ref{F1}. In other words,  the oscillatory nature of 
$\phi$ can be detected only after a suitable magnification of its graph  near the positive equilibrium.

\section*{Acknowledgments}

\vspace{-1.5mm}

\noindent  This research was supported in part by the CONICYT grant 801100006 (A. Ivanov) and the FONDECYT grant 1110309 (S. Trofimchuk and C. Gomez). S. Trofimchuk  also acknowledge support from
CONICYT  PBCT program ACT-56.
C. Gomez was  supported by the CONICYT program
 "Becas para Estudios de Doctorado en Chile". We would like to thank Penn State student Valerie Lindner for her computational and graphical work some of which is used in this paper. She has done this work within a PSU W-B undergraduate student research project.

\vspace{-3.5mm}

\end{document}